    \newtheorem{Lem}{Lemma}[section]
    \newtheorem{Prop}[Lem]{Proposition}
    \newtheorem{Thm}[Lem]{Theorem}
    \newtheorem{Cor}[Lem]{Corollary}
\theoremstyle{definition}
    \newtheorem{Rem}[Lem]{Remark}
    \newtheorem{Que}{Question}
\newcommand{\PNk}{{\mathbb{P}_k^{N}}}
\newcommand{\PNz}{{\mathbb{P}_{\mathbb{Z}}^{N}}}
\newcommand{\Pqd}{{\mathbb{P}^{14}_{k}}^\vee}
\newcommand{\ppt}{\mathbb{P}^{(2)}}
\newcommand{\G}{\mathscr{G}}
\newcommand{\bp}{\mathbb{P}^2_k}
\newcommand{\bpd}{{\mathbb{P}^2_k}^\vee}
\newcommand{\Pq}{\mathbb{P}^{14}_k}
\newcommand{\wtP}{\widetilde{\mathbb{P}}_k^{14}}
\newcommand{\ro}[1]{\rho_{#1}}
\newcommand{\Ro}[1]{R_{#1}}
\newcommand{\kl}{q}
\newcommand{\bitq}[2]{\left\langle #1 , #2 \right\rangle}
\newcommand{\GL}{\mathfrak{gl}_3}
\newcommand{\SL}{\mathfrak{sl}_3}
\newcommand{\fer}{{\rm Fer}}
\newcommand{\Z}{\mathbb{Z}}
\newcommand{\Jn}[1]{J_{#1}}
\newcommand{\an}{A_n}
\newcommand{\tn}{t_n}
\newcommand{\coco}[2]{\left[ #1 \,|\, #2 \right]}
\newcommand{\wtH}{H}
\newcommand{\hh}[1]{H_{#1}}
\newcommand{\oh}[1]{\overline{H}_{#1}}
\begin{document}

\title{On a question of Dolgachev}
\author{Marco Pacini and Damiano Testa}

\maketitle

\begin{abstract}
For each even, positive integer~$n$, we define a rational self-map on the space of plane curves of degree~$n$, using classical contravariants.  In the case of plane quartics, we show that the degree of this map is~15.  This answers a question of Dolgachev on the moduli space of curves of genus~3.
\end{abstract}

\bigskip

\noindent
{\small\bf{Mathematics Subject Classification (2010)}}{
\small
14Q05,
14H45,
14H50,
14N20.}

\medskip

\noindent
{\small\bf{Keywords.}}{
\small
Plane curves, quartics, invariants of ternary forms.}

\section*{Introduction}

An inspiring question of Dolgachev motivates the present paper.  First, we describe a classical construction that associates to a general plane quartic curve $C$ another plane quartic curve $\hh{4}(C)$.

Let~$k$ be a field of characteristic relatively prime to $6$ and fix a smooth plane quartic $C \subset \bp$ over $k$.  A line $\ell \subset \bp$, transverse to $C$, intersects $C$ in a configuration of~$4$ distinct points.  The double cover $C_\ell$ of $\ell$ branched above these~$4$ points is a smooth curve of genus one.  We let $\hh{4}(C) \subset \bpd$ denote the closure of the locus of lines $\ell \subset \bp$ such that the $j$-invariant of the curve $C_\ell$ vanishes.  The closed subset $\hh{4}(C) \subset \bpd$ is a plane curve of degree~$4$.  We thus obtain a rational self-map $\hh{4} \colon \Pq \dashrightarrow \Pq$ of the space of plane quartics, assigning to the quartic $C \subset \bp$ the quartic $\hh{4}(C) \subset \bpd$.

\begin{Que}[Dolgachev] \label{q:acca}
Is the rational map $\hh{4}$ {\emph{generically finite}}?  If so, what is its {\emph{degree}}?
\end{Que}

We answer this question in Theorem~\ref{thm:15}: the map $\hh{4}$ is generically finite of degree~$15$.  The second author found the answer to Question~\ref{q:acca}, involving the use of a computer in an essential way.  We propose here a proof that we check entirely by hand.  Nevertheless, determining the degree is still the outcome of a lengthy computation: we do not have an interpretation for the fibers of the map~$\hh{4}$.

In an attempt to obtain a more conceptual understanding of the fibers of the map~$\hh{4}$, we computed the monodromy group of~$\hh{4}$.  Theorem~\ref{thm:monodromia} shows that this group is the full symmetric group~$\mathfrak{S}_{15}$.  Unfortunately we have not been able to use this information: we still lack an understanding of how the~$15$ quartics in a general fiber of~$\hh{4}$ arise.

To put Question~\ref{q:acca} into perspective, observe that the map~$\hh{4}$ is equivariant with respect to the group $\mathbb{P}GL_3$ of projective changes of coordinates.  Moreover, the quotient of the space of plane quartic curves by the group $\mathbb{P}GL_3$ is birational to the moduli space $\mathcal{M}_3$ of curves of genus~$3$.  After checking that the map $\hh{4}$ is generically finite, we deduce that it descends to a generically finite rational map $\oh{4} \colon \mathcal{M}_3 \dashrightarrow \mathcal{M}_3$.  We show that the degree of~$\oh{4}$ is also~$15$.

More generally, Dolgachev considers rational self-maps of moduli spaces of curves of low genus (such as~$\oh{4}$) and of hypersurfaces (such as~$\hh{4}$) in~\cite{dolg}.  He provides several examples and constructions of dominant rational self-maps of such spaces to themselves of degree strictly larger than~$1$.

In line with Dolgachev's general strategy, for each even degree $n$, we introduce a rational self-map
\[
\hh{n} \colon \mathbb{P}^{\binom{n+2}{2}-1} \dashrightarrow \mathbb{P}^{\binom{n+2}{2}-1}
\]
on the projective space of plane curves of degree $n$ (for odd~$n$, the map is not defined).  As in the case $n=4$, the map~$\hh{n}$ is equivariant with respect to the group $\mathbb{P}GL_3$.  When $\hh{n}$ is generically finite, it descends to a generically finite rational self-map $\oh{n}$ on the moduli space of plane curves of degree~$n$.  We can extend Question~\ref{q:acca} to any even positive integer~$n$.

\begin{Que} \label{q:hn}
Is the rational map $\hh{n}$ {\emph{generically finite}}?  If so, what are the {\emph{degrees}} of $\hh{n}$ and $\oh{n}$?
\end{Que}

For $n=2$, the map $\hh{2}$ assigns to a general conic its dual conic and is therefore birational.  For $n=4$, the map $\hh{4}$ is the one defined above, of degree~$15$.   For $n \in \{2,4\}$, the degrees of $\hh{n}$ and $\oh{n}$ coincide: we do not know if they are always equal.

In Section~\ref{sec:pre}, we briefly set up the notation and basic facts about invariants, contravariants and Lie algebras for ternary forms.  Here, we define the rational maps~$\hh{n}$ on the space of plane curves of degree~$n$ to itself.  We recommend~\cite{cag}*{Chapter~1} as a general introduction to the topic; the whole book contains a wealth of information and details about this beautiful subject and beyond.  In Equation~\eqref{eq:tril}, we introduce the trilinear form~$\tn(-,-,-)$: it provides a fundamental link between an invariant of ternary forms and the maps~$\hh{n}$ via the Lie algebra $\SL$.  The heart of the Section is devoted to the proof of Theorem~\ref{thm:tsim}, providing a crucial symmetry of the trilinear form~$\tn$.  In Section~\ref{sec:grado}, we determine explicitly the scheme-theoretic fiber of~$\hh{4}$ over the Fermat quartic (see Theorem~\ref{thm:fibfer}).  Theorem~\ref{thm:15} exploits the structure of this fiber: the degree of~$\hh{4}$ is~$15$ and the monodromy group of $\hh{4}$ contains the alternating group~$\mathfrak{A}_{15}$ (see also Theorem~\ref{thm:monodromia}).  Knowing that the monodromy group is $2$-transitive, allows us to prove that the degree of the induced quotient map~$\oh{4}$ is also~$15$.  In Section~\ref{sec:inva}, we define an invariant $\ro{n}$ vanishing on the locus where the differential of the map~$\hh{n}$ is not an isomorphism.  In the case $n=4$, the polynomial~$\ro{4}$ is an invariant of degree~$15$ of ternary quartic forms: Theorem~\ref{thm:lrs} gives the expression of $\ro{4}$ in terms of the Dixmier-Ohno invariants.  The proof uses in an essential way the Magma~\cite{magma} package {\tt{g3twists}} described in~\cite{lrs}.  We include in the source of the file~\cite{PTa}, the code that computes and checks our assertions.  In the Appendix, we determine the degree of a scheme that plays an important role in our argument.  The calculation could alternatively be carried out using a computer.

\subsection*{Acknowledgments}
We wish to thank Igor Dolgachev and Jeroen Sijsling for interesting conversations during the preparation of this paper.  We also thank the anonymous referee for suggestions improving the quality of the paper.  The first author was partially supported by CNPq, processo 200377/2015-9 and processo 301314/2016-0.

\section{Preliminary identities}
\label{sec:pre}

In this section, we prove the main identities used in the paper working over the ring~$\Z$ of integers.  In the later sections, we specialize to fields of characteristic coprime to~$6$.

Let $x,y,z$ be homogeneous coordinates on the projective plane $\mathbb{P}^2_{\Z}$ and let $u,v,w$ be the dual coordinates on the dual projective plane.  Let $n$ be a non-negative integer and denote by $\Z [x,y,z]_n$ and $\Z [u,v,w]_n$ the $\Z$-modules of ternary forms of degree $n$ in respective variables $x,y,z$ and $u,v,w$.  Set $N = \binom{n+2}{2}-1$; thus the rank of the free $\Z$-module $\Z [x,y,z]_n$ is $N+1$.  We identify $\mathbb{P}(\Z [x,y,z]_n)$ and $\mathbb{P}(\Z [u,v,w]_n)$ with $\PNz$ equivariantly with respect to the action of the group-scheme $GL_{3,\Z}$.  The projective space $\PNz$ is the space of plane curves of degree~$n$.

We define the {\emph{polar pairing}} $\bitq{-}{-} \colon \Z[u,v,w]_n \times \Z[x,y,z]_n \to \Z$ by
\[
\bitq{q_1}{q_2} = q_1(\partial_x , \partial_y , \partial_z) \, q_2(x,y,z) .
\]
Equivalently, the polar pairing $\bitq{-}{-}$ is the unique bilinear form taking the values
\begin{equation} \label{eq:popa}
( u^{a_1} v^{b_1} w^{c_1} \, , \, x^{a_2} y^{b_2} z^{c_2} ) \longmapsto
\begin{cases}
a_1! \, b_1! \, c_1! & \textrm{if } (a_1,b_1,c_1) = (a_2,b_2,c_2), \\[3pt]
0 & \textrm{otherwise,}
\end{cases}
\end{equation}
on monomials.  The polar pairing allows us to associate to each ternary form $q \in \Z[u,v,w]_n$ a linear form $\bitq{q}{-} \colon \Z [x,y,z]_n \longrightarrow \Z$.

We define a bilinear map
\[
\Jn{n} \colon \Z[x,y,z]_n \times \Z[x,y,z]_n
\longrightarrow
\Z[u,v,w]_n
\]
as follows:
\begin{equation} \label{eq:defjn}
\Jn{n} (q_1(x,y,z) , q_2(x,y,z)) =
q_1 \left(
\begin{vmatrix}
\partial_y & \partial_z \\
v & w
\end{vmatrix}
,
-\begin{vmatrix}
\partial_x & \partial_z \\
u & w
\end{vmatrix}
,
\begin{vmatrix}
\partial_x & \partial_y \\
u & v
\end{vmatrix}
\right)
q_2(x,y,z) .
\end{equation}

For a ternary form $q(x,y,z) \in \Z[x,y,z]_n$ of degree $n$, we let $\hh{n}(q)(u,v,w)$ be the ternary form
\[
\hh{n}(q)(u,v,w) = \Jn{n}(q(x,y,z),q(x,y,z))
\]
of degree $n$; we call $\hh{n} (q)(u,v,w)$ the {\emph{harmonic form}} associated to $q$.  If the integer $n$ is odd, then $\hh{n}(q)$ is always zero.  The function $\hh{n} \colon \Z [x,y,z]_n \to \Z[u,v,w]_n$ is a contravariant of ternary forms with respect to the group-scheme $SL_{3,\Z}$ (see~\cite{cag}*{Section~3.4.2 and Example~3.4.2}).

Combining the polar pairing with the bilinear map $\Jn{n}$, we define a trilinear form $\tn \colon (\Z [x,y,z]_n)^{\times 3} \to \Z$:
\begin{equation} \label{eq:tril}
\tn (q_1,q_2,q_3) = \bitq{\Jn{n} (q_1,q_2)}{q_3}.
\end{equation}

\begin{Thm} \label{thm:tsim}
For every permutation $\sigma \in \mathfrak{S}_3$, the trilinear form $\tn$ satisfies
\[
\tn (q_{\sigma(1)},q_{\sigma(2)},q_{\sigma(3)}) = ({\rm{sign}}\, \sigma)^n \tn (q_1,q_2,q_3),
\]
where ${\rm{sign}}\, \sigma$ denotes the sign of the permutation $\sigma$.
\end{Thm}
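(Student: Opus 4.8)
The plan is to reduce the statement to a single, manifestly symmetric algebraic expression by evaluating $\tn$ on powers of linear forms and then invoking trilinearity. First I would record the geometric meaning of the operators defining $\Jn{n}$. Writing $\nabla=(\partial_x,\partial_y,\partial_z)$ and treating $(u,v,w)$ as a vector of constants with respect to $x,y,z$, the three entries appearing in~\eqref{eq:defjn} are exactly the components of the formal cross product of $\nabla$ with $(u,v,w)$. In particular they are linear combinations of $\partial_x,\partial_y,\partial_z$ with constant coefficients, hence they commute, so the substitution $\Jn{n}(q_1,q_2)=q_1(\cdots)q_2$ is unambiguous. For a linear form $\alpha\cdot\mathbf{x}=\alpha_1x+\alpha_2y+\alpha_3z$, substituting these three operators into $\alpha\cdot\mathbf{x}$ produces the scalar triple product, i.e.\ the $3\times 3$ determinant $[\alpha\,\nabla\,(u,v,w)]$ with rows $\alpha$, $\nabla$, $(u,v,w)$.

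Second, I would reduce to powers of linear forms. The asserted identity equates two $\Z$-trilinear forms in $(q_1,q_2,q_3)$, and two such forms agree over $\Z$ if and only if they agree after tensoring with $\mathbb{Q}$; so I may work over $\mathbb{Q}$. Over $\mathbb{Q}$ the powers $(\alpha\cdot\mathbf{x})^n$ span $\mathbb{Q}[x,y,z]_n$: every linear functional on forms has the shape $\bitq{p}{-}$ for a unique $p$ (the polar pairing being perfect over $\mathbb{Q}$), and $\bitq{p}{(\alpha\cdot\mathbf{x})^n}=n!\,p(\alpha)$ vanishes for all $\alpha$ only when $p=0$. By trilinearity it therefore suffices to verify the transformation law when $q_1=(\alpha\cdot\mathbf{x})^n$, $q_2=(\beta\cdot\mathbf{x})^n$, $q_3=(\gamma\cdot\mathbf{x})^n$.

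Third comes the core computation. Since $[\alpha\,\nabla\,(u,v,w)]$ applied to $(\beta\cdot\mathbf{x})^m$ equals $m(\beta\cdot\mathbf{x})^{m-1}[\alpha\,\beta\,(u,v,w)]$, and the determinant factor is independent of $x,y,z$, iterating $n$ times gives $\Jn{n}\big((\alpha\cdot\mathbf{x})^n,(\beta\cdot\mathbf{x})^n\big)=n!\,[\alpha\,\beta\,(u,v,w)]^n=n!\,\big((\alpha\times\beta)\cdot(u,v,w)\big)^n$. Pairing this against $(\gamma\cdot\mathbf{x})^n$ through $\bitq{(\delta\cdot(u,v,w))^n}{(\gamma\cdot\mathbf{x})^n}=n!\,(\delta\cdot\gamma)^n$ with $\delta=\alpha\times\beta$, and using $(\alpha\times\beta)\cdot\gamma=[\alpha\,\beta\,\gamma]$, I obtain
\[
\tn\big((\alpha\cdot\mathbf{x})^n,(\beta\cdot\mathbf{x})^n,(\gamma\cdot\mathbf{x})^n\big)=(n!)^2\,[\alpha\,\beta\,\gamma]^n,
\]
where $[\alpha\,\beta\,\gamma]$ is the determinant of the matrix with rows $\alpha,\beta,\gamma$. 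A permutation $\sigma$ of the three arguments permutes these rows, so it multiplies the determinant by ${\rm{sign}}\,\sigma$ and hence $[\alpha\,\beta\,\gamma]^n$ by $({\rm{sign}}\,\sigma)^n$; this is precisely the claimed law, which by the second step then holds for arbitrary $q_1,q_2,q_3$.

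The conceptually delicate point—and the step I would watch most carefully—is the symmetry between the first two arguments of $\tn$, which feed into the differential operator $\Jn{n}$, and the third argument, which enters through the polar pairing $\bitq{-}{-}$: a priori these play entirely different roles, and the $(2\,3)$-symmetry is not visible from the definition. The computation above dissolves this asymmetry by funnelling all three arguments into the single determinant $[\alpha\,\beta\,\gamma]^n$. I would therefore take particular care to verify the two factors of $n!$ (one arising from differentiating $(\beta\cdot\mathbf{x})^n$, the other from the polar pairing), the identity $(\alpha\times\beta)\cdot\gamma=[\alpha\,\beta\,\gamma]$, and the validity of the span-and-extend argument over $\mathbb{Q}$ before descending to $\Z$.
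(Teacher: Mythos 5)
Your proof is correct, and it takes a genuinely different route from the one in the paper. The paper argues combinatorially, over $\Z$ and on monomials: it introduces the explicit bracket $\coco{m_1}{m_2}$ of Equation~\eqref{e:defm}, proves a transposition symmetry and a cyclic-shift identity for it by manipulating binomial coefficients (Proposition~\ref{prop:coco}~\eqref{en:msim} and~\eqref{en:mtris}), identifies $\Jn{n}$ with the bracket-defined map $\Jn{n}'$ by induction on $n$ (Corollary~\ref{cor:jnjn}), and then obtains the $\mathfrak{S}_3$-equivariance of $\tn$ by chaining these identities on monomial triples whose product is $(xyz)^n$. You instead polarize: after observing that the operators defining $\Jn{n}$ are the components of $\nabla\times(u,v,w)$, you reduce by trilinearity --- over $\mathbb{Q}$, where $n$-th powers of linear forms span the degree-$n$ forms --- to the closed formula $\tn\bigl((\alpha\cdot\mathbf{x})^n,(\beta\cdot\mathbf{x})^n,(\gamma\cdot\mathbf{x})^n\bigr)=(n!)^2\,[\alpha\,\beta\,\gamma]^n$, from which the whole symmetry is visible at once as invariance of a determinant (up to sign) under row permutations; this is in essence the classical symbolic method, with $\tn$ the full polarization of the invariant $A_n$. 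What your route buys is brevity and transparency: the factor $(\mathrm{sign}\,\sigma)^n$ has a one-line conceptual explanation, instead of emerging from two separate binomial identities (one for the transposition, one for the cyclic shift). What the paper's route buys is that it never leaves $\Z$ --- your spanning step genuinely fails integrally (already $2xy=(x+y)^2-x^2-y^2$ shows that powers of linear forms only span after inverting $2$), so your final descent step is really needed; it is valid precisely because both sides of the claimed identity are $\Z$-valued trilinear forms on a free $\Z$-module, so equality after tensoring with $\mathbb{Q}$ implies equality over $\Z$ --- and the paper's method yields, as a by-product, the explicit monomial formula $\Jn{n}=\Jn{n}'$ for the contravariant. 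Your three flagged checkpoints (the two factors of $n!$, the identity $(\alpha\times\beta)\cdot\gamma=[\alpha\,\beta\,\gamma]$, and the span-and-descend argument) all check out, so the argument stands as written.
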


We now give a proof of Theorem~\ref{thm:tsim}.  The argument is entirely combinatorial and relies on a few identities that we prove first.  We use an alternative definition of the bilinear map~$\Jn{n}$.

Let $m_1,m_2 \in \Z [x,y,z]_n$ be monomials and set $m_3 = \frac{(xyz)^n}{m_1m_2} \in \Z [x^{\pm 1} , y^{\pm 1} , z^{\pm 1}]$.  For $i \in \{ 1 , 2 , 3 \}$, write $m_i = x^{a_i} y^{b_i} z^{c_i}$; by construction, the integers $a_1$, $b_1$, $c_1$, $a_2$, $b_2$, $c_2$ are non-negative, while $a_3,b_3$ and $c_3$ need not be.  Define an integer $\coco{m_1}{m_2}$ by
\begin{equation} \label{e:defm}
\coco{m_1}{m_2} =
(-1)^{a_2 + b_3 + c_1} \,
a_1! \, b_1! \, c_1! \,
\sum (-1)^{\alpha}
\binom{a_2}{\alpha}
\binom{b_2}{c_1-\alpha}
\binom{c_2}{b_3-\alpha}
,
\end{equation}
and observe that the summands vanish if $\alpha$ is outside of the range $0 \leq \alpha \leq \min \{ a_2 , c_1\}$.

\begin{Prop}[Properties of $\coco{-}{-}$] \label{prop:coco}
Let $m_1=x^{a_1} y^{b_1} z^{c_1},m_2=x^{a_2} y^{b_2} z^{c_2} \in \Z [x,y,z]_n$ be monomials.  Set $m_3=\frac{(xyz)^n}{m_1 m_2} = x^{a_3} y^{b_3} z^{c_3}$, with $a_3, b_3, c_3$ integers.
\renewcommand{\theenumi}{\roman{enumi}}
\begin{enumerate}
\item
\label{en:bam}
$\coco{1}{1} = 1$.
\item
\label{en:mric}
If $x$ divides $m_1$, then
$\coco{m_1}{m_2} =
b_2 \coco{\frac{m_1}{x}}{\frac{m_2}{y}}
- c_2 \coco{\frac{m_1}{x}}{\frac{m_2}{z}}$.

\noindent
If $y$ divides $m_1$, then
$\coco{m_1}{m_2} =
c_2 \coco{\frac{m_1}{y}}{\frac{m_2}{z}}
- a_2 \coco{\frac{m_1}{y}}{\frac{m_2}{x}}$.

\noindent
If $z$ divides $m_1$, then
$\coco{m_1}{m_2} =
a_2 \coco{\frac{m_1}{z}}{\frac{m_2}{x}}
- b_2 \coco{\frac{m_1}{z}}{\frac{m_2}{y}}$.
\item
\label{en:msim}
$\coco{m_2}{m_1} = (-1)^n \coco{m_1}{m_2}$.
\item
\label{en:msva}
If at least one of $a_3,b_3,c_3$ is strictly negative, then, for every $\alpha$, the product $\binom{a_2}{\alpha}
\binom{b_2}{c_1-\alpha}
\binom{c_2}{b_3-\alpha}$ vanishes.  In particular, the identity $\coco{m_1}{m_2} = 0$ holds.
\item
\label{en:mtris}
If the integers $a_3,b_3,c_3$ are non-negative, then the identity
\[
a_3! b_3! c_3! \coco{m_1}{m_2} = (-1)^n a_1! b_1! c_1! \coco{m_3}{m_2}
\]
holds.
\end{enumerate}
\end{Prop}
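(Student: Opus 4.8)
The plan is to treat the five assertions in the order \eqref{en:bam}, \eqref{en:msva}, \eqref{en:mric}, and finally to prove the two symmetries \eqref{en:msim} and \eqref{en:mtris} together. Parts \eqref{en:bam} and \eqref{en:msva} are immediate from the definition \eqref{e:defm}. For \eqref{en:bam}, all exponents vanish and the only surviving summand is $\alpha=0$, giving $\coco{1}{1}=1$. For \eqref{en:msva}, I would first record the coefficient-extraction form of the sum,
\[
\sum_\alpha(-1)^\alpha\binom{a_2}{\alpha}\binom{b_2}{c_1-\alpha}\binom{c_2}{b_3-\alpha}=[s^{c_1}t^{b_3}]\,(1-st)^{a_2}(1+s)^{b_2}(1+t)^{c_2},
\]
obtained by reading the three binomials as coefficients in $(1-st)^{a_2}$, $(1+s)^{b_2}$ and $(1+t)^{c_2}$. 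Since $\deg m_1=\deg m_2=n$ forces $a_3=b_1+c_1-a_2$, $b_3=a_2+c_2-b_1$ and $c_3=a_2+b_2-c_1$, a nonzero summand requires $0\le\alpha\le b_3$ (whence $b_3\ge0$), $c_1-b_2\le\alpha\le a_2$ (whence $c_3\ge0$) and $b_3-c_2\le\alpha\le c_1$ (whence $a_3\ge0$); so a negative $a_3$, $b_3$ or $c_3$ kills every summand.

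For the recursion \eqref{en:mric}, I would assume $x\mid m_1$ and substitute the reduced monomials into \eqref{e:defm}. Factoring the common prefactor $(-1)^{a_2+b_3+c_1}(a_1-1)!\,b_1!\,c_1!$ out of $b_2\coco{\frac{m_1}{x}}{\frac{m_2}{y}}-c_2\coco{\frac{m_1}{x}}{\frac{m_2}{z}}$ reduces the claim to the summand-wise identity
\[
a_1\binom{b_2}{c_1-\alpha}\binom{c_2}{b_3-\alpha}=b_2\binom{b_2-1}{c_1-\alpha}\binom{c_2}{b_3-\alpha}+c_2\binom{b_2}{c_1-\alpha}\binom{c_2-1}{b_3-1-\alpha}.
\]
Applying the absorption identities $b_2\binom{b_2-1}{c_1-\alpha}=(b_2-c_1+\alpha)\binom{b_2}{c_1-\alpha}$ and $c_2\binom{c_2-1}{b_3-1-\alpha}=(b_3-\alpha)\binom{c_2}{b_3-\alpha}$, the right-hand side collapses to $(b_2+b_3-c_1)\binom{b_2}{c_1-\alpha}\binom{c_2}{b_3-\alpha}$, and $b_2+b_3-c_1=a_1$ by the degree relations above. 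The other two cases of \eqref{en:mric} follow from the same computation after cyclically relabelling the variables, the only change being the sign shifts in the prefactor coming from the shifts of $b_3$ and $c_1$.

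The crux is the pair of symmetries \eqref{en:msim} and \eqref{en:mtris}, which I would prove together by induction on $n$ with base case \eqref{en:bam}. The key observation for \eqref{en:mtris} is that $\frac{(xyz)^n}{m_3m_2}=m_1$, so the third monomial attached to $\coco{m_3}{m_2}$ is exactly $m_1$; thus \eqref{en:msim} and \eqref{en:mtris} are precisely the two transpositions (swapping the first two, respectively the first and third, entries of the triple $m_1,m_2,m_3$), and together they generate $\mathfrak{S}_3$. In the inductive step I would reduce $\coco{m_1}{m_2}$ by \eqref{en:mric}, apply the inductive hypotheses in degree $n-1$, and invoke \eqref{en:msva} to discard terms in which a reduced monomial acquires a negative exponent. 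Reducing the first slot by \eqref{en:mric} and then symmetrising through the degree-$(n-1)$ case of \eqref{en:msim} produces a companion recursion that strips a variable from the \emph{second} slot; it is precisely this interplay that forces \eqref{en:msim} and \eqref{en:mtris} to be carried simultaneously. The main obstacle will be the sign-and-factorial bookkeeping here: matching $(-1)^{n-1}$ against $(-1)^n$, tracking the shifts of $a_3,b_3,c_3$ under the reductions, and absorbing factors such as $a_1!=a_1(a_1-1)!$.

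Conceptually, all of this reflects the fact that $\tn(m_1,m_2,m_3)=a_3!\,b_3!\,c_3!\,\coco{m_1}{m_2}$ is the coefficient of $u^{a_3}v^{b_3}w^{c_3}$ in $\Jn{n}(m_1,m_2)$ rescaled by the polar pairing, and the sign $(-1)^n$ in \eqref{en:msim} and \eqref{en:mtris} is the sign incurred by a transposition of the three argument slots. An alternative, less self-contained route would be to establish this interpretation first and read off the two symmetries from the antisymmetry of the underlying determinantal construction; but I expect the inductive bookkeeping to be the genuine difficulty, everything else being routine once the coefficient-extraction form of the sum is in hand.
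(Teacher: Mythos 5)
Parts \eqref{en:bam}, \eqref{en:mric} and \eqref{en:msva} of your proposal are correct and essentially match the paper: for \eqref{en:mric} you use the same two absorption identities together with the relation $b_2+b_3-c_1=a_1$, and your coefficient-extraction argument for \eqref{en:msva} is an equivalent reorganization of the paper's case analysis. The problem lies in the joint induction you propose for \eqref{en:msim} and \eqref{en:mtris}: as sketched, it is circular.

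Concretely, suppose $x$ divides both $m_1$ and $m_2$ (the other cases behave the same way). The inductive step has exactly two tools: the first-slot recursion \eqref{en:mric} and the degree-$(n-1)$ symmetries. Applying \eqref{en:mric} to $\coco{m_2}{m_1}$ and then the inductive hypothesis \eqref{en:msim} in degree $n-1$ yields
\[
\coco{m_2}{m_1} \;=\; (-1)^{n-1}\left[\, b_1\coco{\frac{m_1}{y}}{\frac{m_2}{x}} - c_1\coco{\frac{m_1}{z}}{\frac{m_2}{x}}\,\right],
\]
so the desired equality $\coco{m_2}{m_1}=(-1)^n\coco{m_1}{m_2}$ is equivalent to the \emph{second-slot} recursion
\[
\coco{m_1}{m_2} \;=\; c_1\coco{\frac{m_1}{z}}{\frac{m_2}{x}} - b_1\coco{\frac{m_1}{y}}{\frac{m_2}{x}}
\qquad (x \mid m_2).
\]
Your ``companion recursion'' does not supply this identity: combining \eqref{en:mric} with the inductive hypothesis only produces expansions of brackets $\coco{P}{Q}$ whose \emph{first} slot is divisible by $x$ in terms of swapped brackets, and turning such an expansion into the displayed second-slot recursion is, modulo the inductive hypothesis, precisely the statement \eqref{en:msim} in degree $n$ that you are trying to prove. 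The same circularity affects \eqref{en:mtris}. Two repairs are possible. One is to prove the second-slot recursion independently, directly from Equation~\eqref{e:defm}, by the same summand-wise computation you carried out for \eqref{en:mric}; with that in hand your induction does close. The other --- the paper's route, which is shorter and needs no induction at all --- is a direct change of summation variable: expanding every binomial into factorials, the substitution $\alpha'=b_3-\alpha$ gives the term-by-term identity
\[
a_1! \, b_1! \, c_1! \,\binom{a_2}{\alpha}\binom{b_2}{c_1-\alpha}\binom{c_2}{b_3-\alpha}
=
a_2! \, b_2! \, c_2! \,\binom{a_1}{\alpha'}\binom{b_1}{c_2-\alpha'}\binom{c_1}{b_3-\alpha'},
\]
which, together with the sign relation $(-1)^{a_2+b_3+c_1+\alpha}=(-1)^n(-1)^{a_1+b_3+c_2-\alpha'}$ and summation over $\alpha$, proves \eqref{en:msim}; the substitution $\alpha'=a_2-\alpha$ proves \eqref{en:mtris} in exactly the same way. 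Your closing remark that $\tn$ and the $\mathfrak{S}_3$-action explain the signs is accurate as motivation, but, as you note yourself, it cannot be used here without circularity, since the paper derives Theorem~\ref{thm:tsim} from this Proposition and not conversely.
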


\begin{proof}
\eqref{en:bam} Follows directly from the definition.

\eqref{en:mric}
We only argue the case in which $x$ divides $m_1$; the remaining cases are analogous.  The identities
\[
b_2 \binom{b_2-1}{c_1-\alpha} = (b_2 - c_1 + \alpha)
\binom{b_2}{c_1-\alpha}
\qquad
\quad
c_2 \binom{c_2-1}{b_3-1-\alpha}
=
(b_3 - \alpha)
\binom{c_2}{b_3-\alpha}
\]
and
\[
b_2 - c_1 + \alpha + b_3 - \alpha = a_1
\]
hold.  Combining these identities with Equation~\eqref{e:defm} we find
\[
\begin{array}{rcl}
b_2 \coco{\frac{m_1}{x}}{\frac{m_2}{y}} - c_2 \coco{\frac{m_1}{x}}{\frac{m_2}{z}} =
\\[12pt]
& &
\hspace{-120pt}
= \displaystyle
(-1)^{a_2 + b_3 + c_1} \,
(a_1-1)! \, b_1! \, c_1! \,
\sum (-1)^{\alpha}
\binom{a_2}{\alpha}
b_2
\binom{b_2-1}{c_1-\alpha}
\binom{c_2}{b_3-\alpha} \\[12pt]
& &
\hspace{-113pt}
\displaystyle
\hspace{-8pt}
- (-1)^{a_2 + b_3 -1 + c_1} \,
(a_1-1)! \, b_1! \, c_1! \,
\sum (-1)^{\alpha}
\binom{a_2}{\alpha}
\binom{b_2}{c_1-\alpha}
c_2
\binom{c_2-1}{b_3-1-\alpha} \\[12pt]
& &
\hspace{-120pt}
= \displaystyle
(-1)^{a_2 + b_3 + c_1} \,
a_1! \, b_1! \, c_1! \,
\sum (-1)^{\alpha}
\binom{a_2}{\alpha}
\binom{b_2}{c_1-\alpha}
\binom{c_2}{b_3-\alpha} \\[12pt]
& &
\hspace{-120pt}
= \coco{m_1}{m_2}
\end{array}
\]
as needed.

\eqref{en:msim}
Let $\alpha$ be any integer and set $\alpha' = b_3 - \alpha$.  Expanding the binomials in the product $a_1! \, b_1! \, c_1! \,
\binom{a_2}{\alpha}
\binom{b_2}{c_1-\alpha}
\binom{c_2}{b_3-\alpha}$
we find
\[
%
\frac
{a_1! \, b_1! \, c_1! \, a_2! \, b_2! \, c_2!}
{\alpha! (a_2-\alpha)!
(c_1-\alpha)! (b_2 - c_1 + \alpha)!
(b_3-\alpha)! (c_2 - b_3 + \alpha)!}.
\]
We use the equalities
\[
a_2 - b_3 + \alpha' = b_1 - c_2 + \alpha'
\qquad
\qquad
b_2 - c_1 + b_3 - \alpha' = a_1 - \alpha'
\]
to find
\begin{equation} \label{eq:1f2f}
a_1! \, b_1! \, c_1! \,
\binom{a_2}{\alpha}
\binom{b_2}{c_1-\alpha}
\binom{c_2}{b_3-\alpha}
=
a_2! \, b_2! \, c_2! \,
\binom{a_1}{\alpha'}
\binom{b_1}{c_2-\alpha'}
\binom{c_1}{b_3-\alpha'}.
\end{equation}
Finally, from the identity
\[
a_2 + b_3 + c_1 = n - a_1 + 2 b_3 - c_2
\]
we deduce
\begin{equation} \label{eq:1s2s}
(-1)^{a_2 + b_3 + c_1 + \alpha} = (-1)^n (-1)^{a_1 + b_3 + c_2 - \alpha'}.
\end{equation}
Combining Equations~\eqref{eq:1f2f} and~\eqref{eq:1s2s} and summing over all $\alpha$, we obtain the required identity
\[
\coco{m_1}{m_2} =(-1)^n \coco{m_2}{m_1}.
\]

\eqref{en:msva}
If $\alpha$ is not in the interval $[0,\min \{ a_2 , c_1\}]$, then the product $\binom{a_2}{\alpha} \binom{b_2}{c_1-\alpha}$ vanishes.  Suppose therefore that $\alpha$ satisfies the inequalities $0 \leq \alpha \leq \min \{ a_2 , c_1\}$.
\begin{itemize}
\item
If $a_3<0$, then, using $c_2-b_3 = a_3-c_1$, we obtain that $\binom{c_2}{b_3-\alpha} = \binom{c_2}{c_2-b_3+\alpha} = \binom{c_2}{a_3-c_1+\alpha}$ vanishes.
\item
If $b_3<0$, then $\binom{c_2}{b_3-\alpha}$ vanishes.
\item
If $c_3<0$, then, using $b_2-c_1 = c_3-a_2$, we obtain that  $\binom{b_2}{c_1-\alpha} = \binom{b_2}{b_2-c_1+\alpha} =\binom{b_2}{c_3+\alpha-a_2}$ vanishes.
\end{itemize}
The vanishing of $\coco{m_1}{m_2}$ follows, since we just proved that every summand in Equation~\eqref{e:defm} is zero.

\eqref{en:mtris}
Using the identities $c_2-b_3 = b_1-a_2$ and $b_2-c_1 = c_3-a_2$, we find, for any integer $\alpha$, the equalities
\[
\binom{c_2}{b_3-\alpha} =
\binom{c_2}{c_2-b_3+\alpha} =
\binom{c_2}{b_1-(a_2-\alpha)}
\]
and
\[
\binom{b_2}{c_1-\alpha} =
\binom{b_2}{b_2-c_1+\alpha} =
\binom{b_2}{c_3-(a_2-\alpha)}.
\]
Moreover, also the equality $b_3+c_1-(b_1+c_3) = n+a_2 -2(b_1+c_3)$ holds.  Set $\alpha' = a_2-\alpha$; we deduce the equality
\begin{multline*}
(-1)^{a_2 + b_3 + c_1 + \alpha}
\binom{a_2}{\alpha}
\binom{b_2}{c_1-\alpha}
\binom{c_2}{b_3-\alpha}
=
\\
=
(-1)^n
(-1)^{a_2 + b_1 + c_3 + \alpha'}
\binom{a_2}{\alpha'}
\binom{b_2}{c_3-\alpha'}
\binom{c_2}{b_1-\alpha'}
\end{multline*}
and we conclude summing over all $\alpha$.
\end{proof}

Let $m_1,m_2$ be monomials in $\Z [x,y,z]_n$.  The ratio $\frac{(xyz)^n}{m_1m_2}$ is a Laurent monomial in $\Z [x^{\pm 1} , y^{\pm 1} , z^{\pm 1}]$ and we let $a_3,b_3,c_3 \in \Z$ be its exponents: $\frac{(xyz)^n}{m_1m_2} = x^{a_3} y^{b_3} z^{c_3}$.

We define a bilinear map $\Jn{n}' \colon \Z[x,y,z]_n \times \Z[x,y,z]_n \to  \Z[u,v,w]_n$ by setting
\[
\Jn{n}'(m_1,m_2) = \coco{m_1}{m_2} u^{a_3} v^{b_3} w^{c_3}
\]
on monomials and extending by bilinearity.

\begin{Cor} \label{cor:jnjn}
For all non-negative integers $n$, the bilinear maps $\Jn{n}$ and $\Jn{n}'$ coincide.
\end{Cor}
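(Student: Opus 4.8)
The plan is to verify the identity $\Jn{n}(m_1,m_2)=\Jn{n}'(m_1,m_2)$ on monomials $m_1=x^{a_1}y^{b_1}z^{c_1}$ and $m_2=x^{a_2}y^{b_2}z^{c_2}$ of degree~$n$; since both maps are bilinear, this suffices. Write $D_1=w\partial_y-v\partial_z$, $D_2=u\partial_z-w\partial_x$ and $D_3=v\partial_x-u\partial_y$ for the three determinant operators appearing in~\eqref{eq:defjn}, so that $\Jn{n}(m_1,m_2)=D_1^{a_1}D_2^{b_1}D_3^{c_1}(m_2)$. A direct computation of the three commutators shows that $D_1,D_2,D_3$ commute pairwise; hence this operator is unambiguous and, for any variable dividing~$m_1$, the corresponding factor may be moved to the far right.

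I would argue by induction on~$n$. For $n=0$ both sides equal $\coco{1}{1}=1$ by part~\eqref{en:bam}. For the inductive step, suppose the statement holds in degree $n-1$ and assume that $x$ divides~$m_1$ (the cases in which $y$ or $z$ divides~$m_1$ are identical, using $D_2$ or $D_3$ in place of~$D_1$ together with the matching formula in part~\eqref{en:mric}). Moving one factor $D_1$ to the right, applying it to~$m_2$ via $D_1 m_2=b_2\,w\,(m_2/y)-c_2\,v\,(m_2/z)$, and using that the remaining operator commutes with multiplication by $u,v,w$, gives
\[
\Jn{n}(m_1,m_2)=b_2\,w\,D_1^{a_1-1}D_2^{b_1}D_3^{c_1}\bigl(\tfrac{m_2}{y}\bigr)-c_2\,v\,D_1^{a_1-1}D_2^{b_1}D_3^{c_1}\bigl(\tfrac{m_2}{z}\bigr),
\]
where the operator $D_1^{a_1-1}D_2^{b_1}D_3^{c_1}$ is exactly $\Jn{n-1}(\frac{m_1}{x},-)$ acting on the two monomials $\frac{m_2}{y}$ and $\frac{m_2}{z}$ of degree $n-1$.

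The key step is then to invoke the inductive hypothesis on the pairs $(\frac{m_1}{x},\frac{m_2}{y})$ and $(\frac{m_1}{x},\frac{m_2}{z})$. Setting $x^{a_3}y^{b_3}z^{c_3}=\frac{(xyz)^n}{m_1m_2}$, the complementary exponents for these two pairs are $(a_3,b_3,c_3-1)$ and $(a_3,b_3-1,c_3)$ respectively, so the induction yields $\coco{\frac{m_1}{x}}{\frac{m_2}{y}}u^{a_3}v^{b_3}w^{c_3-1}$ and $\coco{\frac{m_1}{x}}{\frac{m_2}{z}}u^{a_3}v^{b_3-1}w^{c_3}$. Substituting and collecting the common factor $u^{a_3}v^{b_3}w^{c_3}$ turns the right-hand side into $\bigl(b_2\coco{\frac{m_1}{x}}{\frac{m_2}{y}}-c_2\coco{\frac{m_1}{x}}{\frac{m_2}{z}}\bigr)u^{a_3}v^{b_3}w^{c_3}$, and the bracket equals $\coco{m_1}{m_2}$ by part~\eqref{en:mric}. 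This is precisely $\Jn{n}'(m_1,m_2)$, completing the induction.

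The main point requiring care is the bookkeeping of these complementary exponents together with the edge cases in which one of them is negative: there the relevant bracket vanishes by part~\eqref{en:msva}, so the corresponding term is genuinely zero and the formal monomial with a negative exponent never contributes. I expect this exponent bookkeeping, rather than any single algebraic identity, to be the only real obstacle; the three formulas of part~\eqref{en:mric} are tailored precisely so that the recursion for $\coco{-}{-}$ mirrors the action of $D_1,D_2,D_3$ on~$m_2$.
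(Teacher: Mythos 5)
Your proof is correct and follows essentially the same route as the paper's: induction on $n$, peeling off one factor of the first determinant operator via the recursion coming from Equation~\eqref{eq:defjn}, applying the inductive hypothesis to the pairs $(\tfrac{m_1}{x},\tfrac{m_2}{y})$ and $(\tfrac{m_1}{x},\tfrac{m_2}{z})$, and concluding with Proposition~\ref{prop:coco}~\eqref{en:mric}. The only difference is that you make explicit two points the paper leaves implicit, namely the pairwise commutation of the operators $D_1,D_2,D_3$ and the vanishing, via Proposition~\ref{prop:coco}~\eqref{en:msva}, of terms whose complementary exponents become negative.
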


\begin{proof}
We proceed by induction on $n$.  The case $n=0$ follows from the definitions.  Suppose that $n>0$ is an integer and that the maps $\Jn{n-1}$ and $\Jn{n-1}'$ coincide.  Let $m_1,m_2$ be monomials in $\Z[x,y,z]_n$.  To prove the result, it suffices to show that the identity $\Jn{n}(m_1,m_2) = \Jn{n}'(m_1,m_2)$ holds.  Let $m_3$ denote the Laurent monomial $\frac{(xyz)^n}{m_1m_2}$.  For $i \in \{1,2,3\}$, write $m_i = x^{a_i} y^{b_i} z^{c_i}$.  If $x$ divides $m_1$, that is, if $a_1$ is strictly positive, then Equation~\eqref{eq:defjn} implies the identity
\[
\Jn{n} (m_1,m_2) =
b_2 \Jn{n-1}
\left(
\frac{m_1}{x} ,
\frac{m_2}{y}
\right) w
-
c_2\Jn{n-1}
\left(
\frac{m_1}{x} ,
\frac{m_2}{z}
\right) v,
\]
with the convention that if $b_2$ or $c_2$ vanish, then the corresponding term vanishes as well.  Using the inductive hypothesis, we obtain
\[
\Jn{n} (m_1,m_2) = b_2
\coco{\frac{m_1}{x}}{\frac{m_2}{y}} u^{a_3}v^{b_3}w^{c_3-1}
w
-
c_2
\coco{\frac{m_1}{x}}{\frac{m_2}{z}} u^{a_3}v^{b_3-1}w^{c_3}
v
\]
and this last expression equals $\Jn{n}'(\frac{m_1}{x},\frac{m_2}{z})$ by Proposition~\ref{prop:coco}~\eqref{en:mric}.  Arguing similarly if $y$ or $z$ divides $m_1$, we conclude the proof of the induction step.  The result follows by induction.
\end{proof}

\begin{proof}[Proof of Theorem~\ref{thm:tsim}]
Let $q_1,q_2,q_3 \in \Z[x,y,z]_n$ be three forms.  By Corollary~\ref{cor:jnjn} and Proposition~\ref{prop:coco}~\eqref{en:msim}, the identity
\[
\Jn{n} (q_2,q_1) = (-1)^n \Jn{n}(q_1,q_2)
\]
holds.  In particular, to prove the result, it suffices to show the identity
\[
\tn (q_1,q_2,q_3) = \tn(q_2,q_3,q_1).
\]
Using the linearity of $\tn$ in its three arguments, it suffices to prove the result in the case in which $q_1,q_2,q_3$ are monomials.  For $i \in \{1,2,3\}$, write $q_i = x^{a_i} y^{b_i} z^{c_i}$.  Using the definition of the polar pairing, we deduce that $\tn(q_1,q_2,q_3)$ vanishes if the monomial $q_1 q_2 q_3$ is not equal to $(xyz)^n$.  Thus, suppose that $q_1 q_2 q_3$ equals $(xyz)^n$.  Corollary~\ref{cor:jnjn} allows us to deduce the equality $\Jn{n} (q_1,q_2) = \coco{q_1}{q_2} u^{a_3} v^{b_3} w^{c_3}$.  We compute
\[
\begin{array}{lrcl}
{\scriptstyle{\textrm{Equation~\eqref{eq:tril}}}}
& \tn (q_1,q_2,q_3) & = & \coco{q_1}{q_2}
\bitq{u^{a_3} v^{b_3} w^{c_3}} {x^{a_3} y^{b_3} z^{c_3}}
\\[3pt]
{\scriptstyle{\textrm{Equation~\eqref{eq:popa}}}}
&& = &
a_3! b_3! c_3! \coco{q_1}{q_2} \\[3pt]
{\scriptstyle{\textrm{Proposition~\ref{prop:coco}~\eqref{en:mtris}}}}
&& = &
(-1)^n a_1! b_1! c_1! \coco{q_3}{q_2} \\[3pt]
{\scriptstyle{\textrm{Corollary~\ref{cor:jnjn}}}}
&& = &
\bitq{(-1)^n \Jn{n}(q_3,q_2)}{q_1} \\[3pt]
{\scriptstyle{\textrm{Proposition~\ref{prop:coco}~\eqref{en:msim}}}}
&& = &
\bitq{\Jn{n}(q_2,q_3)}{q_1} \\[3pt]
{\scriptstyle{\textrm{Equation~\eqref{eq:tril}}}}
&& = &
\tn(q_2,q_3,q_1)
\end{array}
\]
and we are done.
\end{proof}

We make use of the relationship between the contravariant $\hh{n}$ and an invariant $\an$ under the action of $SL_{3,\Z}$.  The expression
\[
\an (q) = \tn (q,q,q) = \bitq{\hh{n} (q)}{q} \in \Z
\]
is an invariant of ternary forms $q$ under $SL_{3,\Z}$; the degree of $\an$ in the coefficients of the form $q$ is~$3$.  If $n$ is odd, then the harmonic form $\hh{n}$ vanishes identically; therefore, the same is true of the invariant $\an$.  Denote by $\GL$ and by $\SL$ the Lie algebras of $GL_{3,\Z}$ and $SL_{3,\Z}$ respectively.

\begin{Thm}\label{thm:lienu}
For every derivation $\mathfrak{g}$ in $\SL$, the identity
\begin{equation} \label{eq:lierel}
\bitq{\hh{n}(q)}{\mathfrak{g}q} = 0,
\end{equation}
holds.
\end{Thm}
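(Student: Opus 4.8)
The plan is to exploit that $\an(q) = \tn(q,q,q)$ is an invariant of ternary forms under $SL_{3,\Z}$ and to extract Equation~\eqref{eq:lierel} by linearising the invariance relation in the direction of $\mathfrak{g}$. First I would dispose of the case $n$ odd. By Corollary~\ref{cor:jnjn} and Proposition~\ref{prop:coco}~\eqref{en:msim} the identity $\Jn{n}(q,q) = (-1)^n \Jn{n}(q,q)$ holds, so for odd $n$ we get $2\hh{n}(q) = 0$ and hence $\hh{n}(q) = 0$, since $\Z$ is torsion-free; thus~\eqref{eq:lierel} is trivial. From now on I assume $n$ even, so that Theorem~\ref{thm:tsim} makes $\tn$ a \emph{symmetric} trilinear form.

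Next I would linearise the invariance of $\an$ by working over the ring of dual numbers $R = \Z[\epsilon]/(\epsilon^2)$, regarding the coefficients of $q$ and the entries of $\mathfrak{g}$ as indeterminates. Because $\mathfrak{g} \in \SL$ is traceless, the matrix $1 + \epsilon\mathfrak{g}$ has determinant $1 + \epsilon \operatorname{tr}(\mathfrak{g}) = 1$ and therefore lies in $SL_3(R)$; its action on the degree-$n$ form $q$ is $(1+\epsilon\mathfrak{g})\cdot q = q + \epsilon\,\mathfrak{g}q$, where $\mathfrak{g}q$ is the induced action of $\mathfrak{g}$ on $q$. Applying the $SL_{3,\Z}$-invariance of $\an$ and expanding with the trilinearity of $\tn$ modulo $\epsilon^2$, I obtain
\[
\an(q) = \an(q + \epsilon\,\mathfrak{g}q) = \an(q) + \epsilon\bigl(\tn(\mathfrak{g}q,q,q) + \tn(q,\mathfrak{g}q,q) + \tn(q,q,\mathfrak{g}q)\bigr),
\]
so that, comparing coefficients of $\epsilon$,
\[
\tn(\mathfrak{g}q,q,q) + \tn(q,\mathfrak{g}q,q) + \tn(q,q,\mathfrak{g}q) = 0 .
\]

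Finally I would invoke the symmetry. Since $n$ is even, Theorem~\ref{thm:tsim} gives $(\operatorname{sign}\sigma)^n = 1$ for every $\sigma \in \mathfrak{S}_3$, so the three summands above are all equal to $\tn(q,q,\mathfrak{g}q) = \bitq{\Jn{n}(q,q)}{\mathfrak{g}q} = \bitq{\hh{n}(q)}{\mathfrak{g}q}$. Hence $3\,\bitq{\hh{n}(q)}{\mathfrak{g}q} = 0$, and dividing by $3$ (legitimate over $\Z$) yields~\eqref{eq:lierel}. The delicate point of the argument is the linearisation step: one must know that invariance of $\an$ under the group scheme $SL_{3,\Z}$ forces its annihilation by the Lie algebra $\SL$, which the dual-number computation makes precise over $\Z$. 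Here the tracelessness of $\mathfrak{g}$ is exactly what guarantees $1+\epsilon\mathfrak{g} \in SL_3(R)$; for the scalar direction in $\GL$ the same computation would instead reproduce the Euler relation $3n\,\an(q)$, which does not vanish, so the restriction to $\SL$ is essential.
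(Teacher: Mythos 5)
Your proof is correct and is essentially the paper's own argument: both linearize the $SL_{3,\Z}$-invariance of $\an$ in the direction of $\mathfrak{g}$, use Theorem~\ref{thm:tsim} to identify the three resulting terms with $\bitq{\hh{n}(q)}{\mathfrak{g}q}$, and divide by $3$ over $\Z$. The only differences are presentational: the paper applies the Leibniz rule for the derivation $\mathfrak{g}$ acting on $\tn$ and simply asserts $\mathfrak{g}\an(q)=0$, whereas you make that annihilation precise via dual numbers; also, your separate treatment of odd $n$ is unnecessary, since the cyclic permutations used to equate the three terms are even and so the paper's argument works uniformly in $n$.
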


\begin{proof}
If $\mathfrak{g}$ is a derivation in $\GL$ and $q_1,q_2,q_3$ are forms in $\Z[x,y,z]_n$, then the equality
\[
\mathfrak{g} \tn (q_1,q_2,q_3) = \bitq{\Jn{n}(\mathfrak{g}q_1,q_2)}{q_3} + \bitq{\Jn{n}(q_1,\mathfrak{g}q_2)}{q_3} + \bitq{\Jn{n}(q_1,q_2)}{\mathfrak{g}q_3}
\]
holds.  Using Theorem~\ref{thm:tsim}, we obtain the identity $\mathfrak{g} \an (q) = 3 \bitq{\Jn{n}(q,q)}{\mathfrak{g}q}$.  Suppose now that $\mathfrak{g}$ is in $\SL$.  Since $\an$ is invariant under $SL_{3,\Z}$, we deduce that $\mathfrak{g} \an (q) = 0$.  Combining these formulas, we obtain the required identity.
\end{proof}

\section{The computation of the degree}
\label{sec:grado}

We now restrict our attention to the case $n=4$.  To perform the main computations, we work over a general field~$k$ of characteristic zero.  An easy argument appearing in Remark~\ref{rem:chp0} shows that this restriction on the characteristic can be weakened.

The invariant $A_4(q)$ has degree~$3$ in the coefficients of~$q$: it is, up to scaling, the unique non-constant invariant of smallest degree of plane quartics.
Salmon denotes the contravariant $\hh{4}$ by~$\sigma$~\cite{sal}*{p.~264, \S292} and the invariant~$A_4(q)$ by~$A$~\cite{sal}*{p.~264, \S293}; Dolgachev denotes the contravariant $\hh{4}$ by~$\Omega_{2,4}$ and the invariant~$A_4(q)$ by~$I_3$.

Let $\fer \subset \bp$ denote the Fermat quartic with equation $\fer \colon x^4+y^4+z^4 = 0$; similarly, let $\fer' \subset \bpd$ denote the Fermat quartic with equation $\fer' \colon u^4+v^4+w^4 = 0$.  We also define the four quartics $C_0,C_1,C_2,C_3 \subset \bp$ with equations
\[
\begin{array}{crcl}
C_0 \colon &
(x^4+y^4+z^4) - 6 (x^2 y^2 + x^2 z^2 + y^2 z^2)
& = & 0
\\[4pt]
C_1 \colon &
(x^4+y^4+z^4) - 6 (x^2 y^2 - x^2 z^2 - y^2 z^2)
& = & 0
\\[4pt]
C_2 \colon &
(x^4+y^4+z^4) - 6 (-x^2 y^2 + x^2 z^2 - y^2 z^2)
& = & 0
\\[4pt]
C_3 \colon &
(x^4+y^4+z^4) - 6 (-x^2 y^2 - x^2 z^2 + y^2 z^2)
& = & 0.
\end{array}
\]
The curves $C_0,C_1,C_2,C_3$ are all isomorphic: permutations of the coordinates induce projective equivalences among $C_1,C_2,C_3$; rescaling $z$ by a square root of ${-1}$ transforms $C_0$ into $C_1$.  An easy check shows that they are smooth.

Let $\wtP \subset \Pq \times \Pqd$ be the closure of the graph of the rational map~$\hh{4}$.  The second projection $\Pq \times \Pqd \to \Pqd$ restricts to a morphism
\[
\wtH \colon \wtP \longrightarrow \Pqd .
\]
A plane quartic~$C$ in the indeterminacy locus of~$\hh{4}$ must be singular: see~\cite{PTr}*{Proposition~2.5} for a more precise statement.  Let $(C,D)$ be a pair in~$\wtP$.  If the rational map~$\hh{4}$ is defined at~$C$, then the projection $\Pq \times \Pqd \to \Pq$ restricts to an isomorphism on an open subset of $\wtP$ containing $(C,D)$.  When this happens, to simplify the notation, we identify the pair $(C,D) \in \wtP$ with $C$, since $D$ can be obtained as~$\hh{4}(C)$.

\begin{Thm} \label{thm:fibfer}
The fiber of the morphism~$\wtH$ above the Fermat quartic $\fer' \subset \bpd$ consists of the five quartics $\fer$, $C_0$, $C_1$, $C_2$, $C_3 \subset \bp$, where the Fermat quartic $\fer$ appears with multiplicity~$11$ and each one of the remaining four quartics appears with multiplicity~$1$.
\end{Thm}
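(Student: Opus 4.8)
The plan is to confine the fiber to a linear subspace via the Lie-algebra identity, solve a small system to recover the five quartics set-theoretically, and then compute the local length of the fiber scheme at each of them, the Fermat quartic being the only fat point.

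First I would apply Theorem~\ref{thm:lienu}. The cubic identity there is homogeneous in the ternary form, so the bihomogeneous relation $\bitq{D}{\mathfrak{g}C}=0$ holds on the whole graph closure $\wtP$, for every $\mathfrak{g}\in\SL$ and every pair $(C,D)\in\wtP$. Over the point $\fer'$ one has $D=\fer'$, and $\bitq{\fer'}{-}$ returns a nonzero multiple of the sum of the $x^4$, $y^4$, $z^4$ coefficients of its argument. Letting $\mathfrak{g}$ run through a basis of $\SL$, the six root vectors kill the coefficients of $x^3y,\dots,yz^3$ in $C$, while the two diagonal vectors force the coefficients of $x^4$, $y^4$, $z^4$ to coincide. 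Hence every $C$ in the fiber lies in the linear subspace $L\cong\mathbb{P}^6$ of quartics $\lambda(x^4+y^4+z^4)+s\,x^2y^2+t\,x^2z^2+r\,y^2z^2+p\,x^2yz+q\,xy^2z+m\,xyz^2$. Because this holds on all of $\wtP$, it also constrains any fiber point lying over the indeterminacy locus; I would then invoke Proposition~2.5 of~\cite{PTr} to check that no such point lies over the smooth quartic $\fer'$.

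Next I would compute $\hh{4}$ on $L$ using Corollary~\ref{cor:jnjn}: each coefficient of $\hh{4}(C)$ is an explicit quadratic form in $(\lambda,s,t,r,p,q,m)$ weighted by the integers $\coco{-}{-}$. The condition $\hh{4}(C)\propto\fer'$ requires the three pure coefficients to agree and the other twelve to vanish. The six equations of $u^3v$-type have leading shape $rm=0$, $tp=0$, and so on; together with the equalities $s^2=t^2=r^2$ coming from the pure coefficients they force $p=q=m=0$, and the system collapses to the three-dimensional family, where the vanishing of the mixed coefficients reads $6\lambda s+tr=0$ and cyclically. Solving with $\lambda=1$ returns the origin $\fer$ together with the four sign-patterns $(s,t,r)\in\{\pm6\}^3$ with an odd number of minus signs, that is, $C_0,C_1,C_2,C_3$.

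Finally I would read off the multiplicities. At each $C_i$ a direct computation shows that $d\hh{4}$ has no kernel modulo homotheties, so $C_i$ is a reduced point. For $\fer$, evaluating $\Jn{4}(\fer,-)$ shows that $\ker d\hh{4}|_{\fer}$ is spanned by $x^2yz$, $xy^2z$, $xyz^2$, so the fiber has a three-dimensional Zariski tangent space there and $\fer$ is a fat point of embedding dimension three. Inside $L$, the linear parts $48s$, $48t$, $48r$ of the $u^2v^2$-type equations let me solve $s,t,r$ as power series in $(p,q,m)$ and reduce the fiber ideal to an ideal $J\subset k[[p,q,m]]$; after this substitution the $u^2vw$-type equations read $qm+\tfrac13p^3=0$ and cyclically, and the $u^3v$-type equations read $p^2m=0$, $p^2q=0,\dots$. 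Here lies the main obstacle: the quadratic leading parts alone cut out the three coordinate axes, a non-Artinian scheme, so both the finiteness at $\fer$ and the exact value hinge on carrying the cubic corrections correctly through the elimination. A short standard-basis computation with the displayed leading forms — the step I would isolate in the Appendix, recast as the degree of the resulting zero-dimensional scheme and cross-checked by machine — leaves exactly the eleven monomials $1,p,q,m,pq,pm,qm,pqm,p^2,q^2,m^2$, giving $\dim_k k[[p,q,m]]/J=11$. Summing the local lengths yields $11+1+1+1+1=15$.
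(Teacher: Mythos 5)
Your proposal is correct, and its first half coincides with the paper's proof: the paper applies Theorem~\ref{thm:lienu} with exactly the same eight basis derivations of $\SL$, obtains the same seven-parameter family $\rho(x^4+y^4+z^4)+\sigma_3x^2y^2+\sigma_2x^2z^2+\sigma_1y^2z^2+xyz(\tau_1x+\tau_2y+\tau_3z)$, computes $\hh{4}$ on it, and imposes proportionality with $u^4+v^4+w^4$. Where you genuinely diverge is the quantitative half. The paper transfers the resulting system to the scheme $F\subset\mathbb{P}^6_k$ of the Appendix and computes its \emph{global} degree: Lemma~\ref{lem:groeb} exhibits a hand-checkable Gr\"obner basis for a weighted monomial order, reads off the constant Hilbert polynomial~$15$, proves reducedness at the four points with $\sigma_1\sigma_2\sigma_3\neq0$ via the Jacobian, and obtains the multiplicity~$11$ at the Fermat point \emph{by subtraction}. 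You instead compute the local length at $\fer$ directly: eliminating $\sigma_i$ through the unit $12\rho$ in $S_i=12\rho\sigma_i+2\sigma_j\sigma_k+\tau_i^2$ (so $\sigma_i=-\tau_i^2/12+\cdots$) reduces the fiber ideal to $J\subset k[[\tau_1,\tau_2,\tau_3]]$ with generators of the shapes $\tau_j\tau_k+\tfrac13\tau_i^3$ and $\tau_i^2\tau_j$, and a standard-basis count gives colength~$11$; your eleven monomials are indeed a valid basis (they are a triangular change of basis from the staircase $1,\tau_i,\tau_i^2,\tau_i^3,\tau_3^4$, since $\tau_1\tau_2\equiv-\tfrac13\tau_3^3$ and $\tau_1\tau_2\tau_3\equiv-\tfrac13\tau_3^4$ in the local ring, the latter spanning the socle). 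The paper's route delivers in one stroke the total degree~$15$ of the fiber scheme, which Theorem~\ref{thm:15} then feeds into Miracle Flatness; your route is more local and more explanatory --- it exhibits the three-dimensional tangent space $\langle x^2yz,xy^2z,xyz^2\rangle$ at the fat point and shows exactly how the cubic corrections cut the length down to~$11$ --- at the price of having to verify zero-dimensionality at each support point and then summing local lengths to recover~$15$.

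Two small repairs, neither a real gap. First, citing Proposition~2.5 of~\cite{PTr} does not by itself exclude fiber points over the indeterminacy locus: that result says indeterminacy points are singular quartics, but says nothing about which second coordinates they can acquire in the closure $\wtP$. The efficient argument, implicit in the paper, is that if $(C,\fer')\in\wtP$ with $\hh{4}$ undefined at an equation $q$ of $C$, then $\hh{4}(q)\equiv0$, so all fourteen proportionality forms vanish at $C$ together with the linear conditions; hence $C$ lies in the support of your solution set, which consists of five \emph{smooth} quartics at which $\hh{4}(q)\neq0$ --- a contradiction. So your own computation closes this case and the citation is neither needed nor sufficient. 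Second, your set-theoretic solve fixes $\lambda=1$; you should dispose of $\lambda=0$ as well: there the equations $S_i$ force $\sigma_j\sigma_k=0$ pairwise, which together with $\sigma_1^2=\sigma_2^2=\sigma_3^2$ and $\tau=0$ kills all coordinates, so no point of $\mathbb{P}^6$ arises.
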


\begin{proof}
Let $C$ be a plane quartic and let $q(x,y,z) = \sum a_{ijk} x^i y^j z^k \in k[x,y,z]$ be an equation for $C$.  If the pair $(C,\fer')$ is contained in $\wtP$, then Theorem~\ref{thm:lienu} implies that, for every element $\mathfrak{g}$ of $\SL$, the identity $\bitq{u^4+v^4+z^4}{\mathfrak{g} q} = 0$ holds.  Specializing this identity with $\mathfrak{g}$ in the list
\[
\begin{array}
{ll@{\hspace{40pt}}ll@{\hspace{40pt}}ll@{\hspace{40pt}}ll}
1. &
x \partial_x - y \partial_y
&
3. &
y \partial_x
&
5. &
x \partial_y
&
7. &
x \partial_z
\\[5pt]
2. &
y \partial_y - z \partial_z
&
4. &
z \partial_x
&
6. &
z \partial_y
&
8. &
y \partial_z,
\end{array}
\]
we obtain the identities
\begin{equation*}
\begin{array}
{l@{\hspace{5pt}}l@{\hspace{20pt}}l@{\hspace{5pt}}l@{\hspace{20pt}}l@{\hspace{5pt}}l@{\hspace{20pt}}l@{\hspace{5pt}}l}
1. &
48 (a_{400} - a_{040}) = 0
&
3. &
12 a_{130} = 0
&
5. &
12 a_{013} = 0
&
7. &
12 a_{301} = 0
\\[5pt]
2. &
48 (a_{040} - a_{004}) = 0
&
4. &
12 a_{103} = 0
&
6. &
12 a_{310} = 0
&
8. &
12 a_{031} = 0.
\end{array}
\end{equation*}
We deduce that $q$ is of the form
\[
q(x,y,z) = \rho (x^4+y^4+z^4) + \sigma_{3} x^2 y^2 + \sigma_{2} x^2 z^2 + \sigma_{1} y^2 z^2 + xyz (\tau_{1} x + \tau_{2} y + \tau_{3} z) ,
\]
where $\rho = a_{400} = a_{040} = a_{004}$, $\sigma_1 = a_{022}$, $\sigma_2 = a_{202}$, $\sigma_3 = a_{220}$ and $\tau_1 = a_{211}$, $\tau_2 = a_{121}$, $\tau_3 = a_{112}$.  The pair $(C,\fer')$ is contained in~$\wtP$ if $\hh{4}(q)$ is an equation for~$\fer'$.  Using the expression that we obtained for $q$ we find
\begin{eqnarray*}
\hh{4}(q) & = &
(12 \rho^2 + \sigma_{1}^2) u^4 + (12 \rho^2 + \sigma_{2}^2) v^4 + (12 \rho^2 + \sigma_{3}^2) w^4 \\[3pt]
& &
-2 (\sigma_{3} \tau_{1} v w^3 + \sigma_{3} \tau_{2} u w^3 +
\sigma_{2} \tau_{3} u v^3 + \sigma_{2} \tau_{1} v^3 w +
\sigma_{1} \tau_{2} u^3 w + \sigma_{1} \tau_{3} u^3 v) \\[3pt]
& &
+(12 \rho \sigma_{1} + 2 \sigma_{2} \sigma_{3} + \tau_{1}^2) v^2 w^2 +
(12 \rho \sigma_{2} + 2 \sigma_{1} \sigma_{3} + \tau_{2}^2) u^2 w^2
\\[3pt]
& &
+ (12 \rho \sigma_{3} + 2 \sigma_{1} \sigma_{2} + \tau_{3}^2) u^2 v^2 \\[3pt]
& &
+
(4 \sigma_{1} \tau_{1} - \tau_{2} \tau_{3}) u^2 v w +
(4 \sigma_{2} \tau_{2} - \tau_{1} \tau_{3}) u v^2 w +
(4 \sigma_{3} \tau_{3} - \tau_{1} \tau_{2}) u v w^2 .
\end{eqnarray*}
The condition that $u^4+v^4+w^4$ and $\hh{4}(q)$ be proportional determines a subscheme~$F_0$ of $\mathbb{P}^{14}$.  The scheme~$F_0$ is isomorphic to the scheme~$F$ defined in~\eqref{eq:fibf} and the result follows from Lemma~\ref{lem:groeb}.
\end{proof}

\begin{Rem}
The Fermat curve $\fer$ is not isomorphic to any one of the curves $C_0,C_1,C_2,C_3$.  This is an immediate consequence of Theorem~\ref{thm:fibfer}: the map~$\hh{4}$ is contravariant and hence projectively equivalent curves appear with the same multiplicity in fibers of~$\hh{4}$.
\end{Rem}

We want to compute the monodromy of the morphism~$\wtH$.  For this, we use the following result, due to Jordan (see~\cite{isa}*{Theorem~8.23}).  For a positive integer~$n$, denote by~$\mathfrak{S}_n$ the symmetric group on~$n$ elements and by~$\mathfrak{A}_n$ the alternating group.

\begin{Thm}[Jordan] \label{thm:jordan}
Let~$n$ be a positive integer and let~$G$ be a primitive subgroup of~$\mathfrak{S}_n$.  If~$p$ is a prime satisfying $p < n-2$ and~$G$ contains a $p$-cycle, then~$G$ contains~$\mathfrak{A}_n$.
\end{Thm}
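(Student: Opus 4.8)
The plan is to reduce the statement to two classical base cases and then derive the general prime case from them by a descent argument. Write $\Omega=\{1,\dots,n\}$ and let $\sigma\in G$ be the given $p$-cycle, with support $\Delta=\mathrm{supp}(\sigma)$ of size $p$; since $p<n-2$, it fixes the $n-p\ge 3$ points of $\Omega\setminus\Delta$. I would first isolate the two base facts: a primitive subgroup of $\mathfrak{S}_n$ containing a transposition equals $\mathfrak{S}_n$, and one containing a $3$-cycle contains $\mathfrak{A}_n$. The theorem then follows as soon as I produce inside $G$ either a transposition or a $3$-cycle, because $G$ is primitive by hypothesis.

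For the $3$-cycle base case I would use the elementary lemma that if $A,B\subseteq\Omega$ satisfy $|A\cap B|\ge 2$, then the alternating groups $\mathrm{Alt}(A)$ and $\mathrm{Alt}(B)$ (each fixing the complement pointwise) generate $\mathrm{Alt}(A\cup B)$. Choose a subset $\Delta_0\subseteq\Omega$ maximal with respect to $\mathrm{Alt}(\Delta_0)\le G$ and $|\Delta_0|\ge 3$; such a $\Delta_0$ exists because $G$ contains a $3$-cycle. The lemma together with maximality forces any two distinct $G$-translates of $\Delta_0$ to meet in at most one point, so the translates become the blocks of a $G$-invariant partition (after the usual graph-connectivity bookkeeping); primitivity then forces $\Delta_0=\Omega$, giving $\mathfrak{A}_n\le G$. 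The transposition case is entirely analogous, with $\mathrm{Sym}$ in place of $\mathrm{Alt}$.

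It remains to manufacture a short cycle in $G$, and this is where primitivity of $G$ and the primality of $p$ enter. Using that $G$ is transitive and that $\Delta\ne\Omega$, I would produce a conjugate $\sigma'=\sigma^g$ with support $\Delta'$ satisfying $\Delta'\ne\Delta$ and $0<|\Delta\cap\Delta'|<p$: forming the graph on $\Omega$ that joins two points whenever they lie in a common translate $\Delta^g$, one checks this graph is $G$-invariant and, by primitivity, connected, which forces some translate meeting $\Delta$ to differ from $\Delta$. I would then descend: because $p$ is prime and the overlap $m=|\Delta\cap\Delta'|$ lies strictly between $0$ and $p$, a suitable word in $\sigma$ and $\sigma'$ (a commutator or a product adjusted by powers) is a non-identity element of $G$ whose support is strictly smaller than $p$, and iterating drives the minimal support occurring in $G$ down to $3$ or $2$.

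The main obstacle is precisely this last descent: controlling, uniformly in $p$ and in the overlap $m$, that overlapping conjugate $p$-cycles always yield an element of strictly smaller support, and that the process terminates at a $3$-cycle or a transposition rather than stalling at an intermediate size. This is the genuine content of Jordan's theorem; the primality of $p$ is what prevents the supports from stabilizing, and the hypothesis $p<n-2$ (hence at least three fixed points) is what guarantees enough room for the conjugate to overlap $\Delta$ nontrivially without coinciding with it. Everything else is a formal consequence of primitivity together with the two base cases.
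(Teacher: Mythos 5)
The paper does not prove this statement: it is quoted as a classical theorem of Jordan with a citation to Isaacs's \emph{Finite Group Theory} (Theorem~8.23), so there is no in-paper argument to compare yours with; your proposal has to stand on its own, and as written it does not. Your two base cases (primitive plus a transposition gives $\mathfrak{S}_n$, primitive plus a $3$-cycle gives $\mathfrak{A}_n$) are classical and your sketch of them is essentially right, with one repairable wrinkle: translates of $\Delta_0$ meeting pairwise in at most one point do \emph{not} form a block system, and your stated lemma (requiring $|A\cap B|\ge 2$) cannot rule out the overlap-one configuration; you need the overlap-one version of the generation lemma, which does hold when $|A|,|B|\ge 3$ (e.g.\ $(1\,2\,3)$ and $(3\,4\,5)$ generate $\mathrm{Alt}(\{1,\dots,5\})$), to close that case.

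The genuine gap is the descent, and you name it yourself without filling it. The assertion that whenever two conjugate $p$-cycles $\sigma,\sigma'$ have supports overlapping in $m$ points with $0<m<p$, some word in them is a nontrivial element of support strictly less than $p$, is unproved and is not true in the generality you need: the commutator $[\sigma,\sigma']$ only has support contained in a set of size roughly $3m$, which exceeds $p$ as soon as $m>p/3$, and products $\sigma^a\sigma'^b$ are typically cycles or permutations of support larger than $p$. The overlap-one configuration, where $[\sigma,\sigma']$ really is a $3$-cycle, requires $2p-1\le n$, i.e.\ $p\le (n+1)/2$ --- and this fails exactly in the paper's application, where $n=15$ and $p=11$, so that \emph{any} two conjugate $11$-cycles share at least $2p-n=7$ support points. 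Moreover, even when one step of the descent succeeds, the resulting element is no longer a cycle of prime length, so ``iterating'' is not well-founded as described; and primality of $p$ enters your argument only rhetorically, with no mechanism attached (it cannot be dispensed with cheaply: removing it is a deep result of Jones relying on the classification). The classical proofs, including the one in the reference the paper cites, replace your descent by a different engine: since $\langle\sigma\rangle$ fixes the $n-p\ge 3$ points outside $\Delta$ pointwise and acts primitively on $\Delta$ (transitive of prime degree), Jordan's transitivity theorem bootstraps $G$ to being $(n-p+1)$-transitive, hence at least $4$-transitive, and the conclusion is then extracted from that high transitivity rather than from shrinking supports of words in two overlapping $p$-cycles.
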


\begin{Thm} \label{thm:15}
The morphism~$\wtH \colon \wtP \to \Pqd$ is generically finite of degree~$15$.  The monodromy group of~$\wtH$ contains the alternating group~$\mathfrak{A}_{15}$.
\end{Thm}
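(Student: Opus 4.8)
The plan is to combine the explicit fiber computation of Theorem~\ref{thm:fibfer} with Jordan's theorem (Theorem~\ref{thm:jordan}) to pin down both the degree and the monodromy group simultaneously. First I would establish that $\wtH$ is generically finite. The total degree of the fiber over the Fermat quartic $\fer'$ is $11 + 1 + 1 + 1 + 1 = 15$; since this fiber is zero-dimensional of length~$15$, the morphism $\wtH$ has a fiber of finite length, and because $\wtP$ and $\Pqd$ have the same dimension~$14$, this forces $\wtH$ to be dominant and generically finite. The degree of a generically finite morphism equals the length of a general (reduced) fiber, which by flatness/semicontinuity of fiber length equals the total length of \emph{any} fiber, in particular the fiber over $\fer'$. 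Hence $\deg \wtH = 15$, giving the first assertion.

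The second assertion requires identifying the monodromy group~$G \subseteq \mathfrak{S}_{15}$. The key leverage comes from the special fiber over $\fer'$: it is \emph{not} reduced, but consists of the Fermat point with multiplicity~$11$ and four reduced points. The plan is to interpret this special fiber as a specialization of a general reduced fiber. As we approach $\fer'$ along a general path, the $15$ sheets of the covering degenerate so that $11$ of them collide at $\fer$ while the remaining~$4$ stay distinct. Monodromy around the branch divisor at this point therefore produces a permutation whose cycle structure reflects how these $11$ sheets are glued; provided the ramification at the Fermat point is totally ramified (a single branch with ramification index~$11$), the associated local monodromy is an $11$-cycle fixing the other four sheets. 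I would verify total ramification by examining the scheme structure of $F_0$ (equivalently $F$) near the Fermat point—that it is a curvilinear, i.e.\ Gorenstein of embedding dimension~$1$, length-$11$ subscheme, which signals a single smooth branch of the discriminant meeting the fiber with contact order~$11$.

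Granting the $11$-cycle, I would apply Jordan's theorem with $n = 15$ and $p = 11$: indeed $11$ is prime and $11 < 15 - 2 = 13$, so if $G$ is primitive then $G \supseteq \mathfrak{A}_{15}$. Thus the remaining ingredient is \emph{primitivity} of the monodromy group. Here I would use the $\mathbb{P}GL_3$-equivariance of $\hh{4}$ together with transitivity: the monodromy group of an irreducible cover is transitive, and to upgrade to primitivity I would argue that a nontrivial block system is incompatible with the geometry—for instance, a block system of size dividing~$15$ (so blocks of size $3$ or $5$) would impose an intermediate covering factoring $\wtH$, which one can rule out using the irreducibility of $\wtP$ and the structure of the fiber over $\fer'$, since an $11$-cycle cannot preserve any partition of~$15$ into blocks of size $3$ or $5$. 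In fact the presence of an $11$-cycle already obstructs all nontrivial block systems directly: any block containing two of the $11$ moved points must, under iteration of the cycle, contain all $11$, forcing a block of size~$\geq 11$ and hence (by divisibility) the whole set—so transitivity plus the $11$-cycle yields primitivity immediately, and Jordan's theorem then finishes the argument.

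The main obstacle I anticipate is justifying that the length-$11$ multiplicity at the Fermat point genuinely corresponds to a single $11$-cycle in the monodromy, rather than to a product of smaller disjoint cycles whose ramification indices sum (with the correct local bookkeeping) to the observed multiplicity. This is precisely the gap between knowing the \emph{scheme-theoretic} multiplicity of a point in a special fiber and knowing the \emph{cycle type} of the local monodromy. Resolving it requires the local analytic structure of the covering $\wtH$ at the Fermat point: one must show the discriminant is smooth there with a single analytic branch and that the cover is totally ramified along it, so that the local monodromy is a single $11$-cycle. I expect this to reduce to a careful analysis of the scheme $F$ (via Lemma~\ref{lem:groeb}) showing that its local ring at the Fermat point is isomorphic to $k[t]/(t^{11})$, i.e.\ the subscheme is curvilinear; the curvilinearity is exactly what guarantees total ramification and hence the $11$-cycle.
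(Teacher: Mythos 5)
Your overall skeleton (fiber of length $15$, an $11$-cycle in the local monodromy, the block argument for primitivity, then Jordan) matches the paper's, but the step you yourself flag as the ``main obstacle'' is resolved in a way that cannot work: the length-$11$ point of the fiber is \emph{not} curvilinear. On the affine chart $\rho=1$ of the scheme $F$ of Lemma~\ref{lem:groeb}, the generators $S_i = 12\rho\sigma_i + 2\sigma_j\sigma_k + \tau_i^2$ force $\sigma_1,\sigma_2,\sigma_3$ into the square of the maximal ideal at $[1,0,0,0,0,0,0]$, while no generator of the ideal has a linear term in $\tau_1,\tau_2,\tau_3$; hence the Zariski tangent space of $F$ at the Fermat point has dimension~$3$, and the local ring (of length $11$ and embedding dimension $3$) is not isomorphic to $k[t]/(t^{11})$. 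So there is no single smooth branch of the discriminant with total ramification, and your proposed verification fails. Your degree argument has a related gap: upper semicontinuity of fiber length only yields $\deg \wtH \leq 15$; equality requires \emph{flatness of $\wtH$ at this particular fiber}, which you invoke (``by flatness/semicontinuity'') but never establish.

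Both gaps are closed by one fact your proposal never uses, and which is the crux of the paper's proof: $\wtP$ is \emph{smooth} at every point of the fiber over $\fer'$, because the rational map $\hh{4}$ is defined at each of the five quartics $\fer, C_0,\dots,C_3$, so near these points $\wtP$ is isomorphic to an open subset of $\Pq$. Smoothness of $\wtP$ along the fiber, combined with the Miracle Flatness Theorem, gives flatness at $\fer'$ and hence $\deg\wtH = 15$ exactly. For the monodromy, smoothness makes the local piece of the cover sitting over a neighbourhood of $\fer'$ and containing the multiplicity-$11$ point irreducible; removing the branch locus leaves it connected, so the local monodromy group acts \emph{transitively} on the corresponding $11$ sheets while fixing the other $4$. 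Curvilinearity is then irrelevant: since $11$ is prime, this transitive action forces the group's order to be divisible by $11$, and Cauchy's theorem produces an element of order $11$, which, acting on $11$ points and fixing the remaining $4$, must be an $11$-cycle. From that point on, your argument (an $11$-cycle plus transitivity rules out all nontrivial block systems on $15$ points, so the group is primitive, and Jordan's theorem with $p = 11 < 13$ gives $\mathfrak{A}_{15}$) is correct and coincides with the paper's conclusion via Theorem~\ref{thm:jordan}.
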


\begin{proof}
Let~$F$ be the fiber of~$\wtH$ over the Fermat quartic curve~$\fer' \subset \bpd$.  By Theorem~\ref{thm:fibfer}, the scheme~$F$ is finite of degree~$15$.  Thus, the morphism~$\wtH$ is quasi-finite in a neighbourhood of~$\fer'$ and therefore finite, since it is projective.  To conclude that the degree of~$\wtH$ is~$15$, it suffices to argue that~$\wtH$ is flat at~$\fer'$.  By the Miracle Flatness Theorem~\cite{mats}*{Theorem~23.1}, it is enough to check that $\wtP$ is smooth at~$F$.  This is true, since the rational map~$\hh{4}$ is defined at the points of~$F$ and its domain,~$\Pq$, is smooth (recall that the graph morphism is an immersion, see~\cite{gw}*{Proposition~9.5}).

By what we just proved, the monodromy group of~$\wtH$ is isomorphic to a subgroup~$G$ of the symmetric group~$\mathfrak{S}_{15}$.  Since~$\wtP$ is irreducible, the group~$G$ is transitive.  Since the fiber~$F$ contains four reduced points and one point of multiplicity~$11$, we deduce that~$G$ contains a subgroup with an orbit consisting of~$11$ elements.  Hence, $G$ also contains a cycle of length~$11$ and is therefore primitive (see~\cite{isa}*{Lemma~8.20}).  Theorem~\ref{thm:jordan} shows that $G$ contains the alternating group~$\mathfrak{A}_{15}$ and we are done.
\end{proof}

\begin{Rem}
Denote by $\PNk /\!\!/ \mathbb{P}GL_3$ the GIT-quotient of $\PNk$ by $\mathbb{P}GL_3$.  We check that, for even $n$, the contravariant $\hh{n}$ induces a rational map
\[
\hh{n} \colon \PNk \dashrightarrow \PNk,
\]
descending to a rational map on the quotient
\[
\oh{n} \colon \PNk /\!\!/ \mathbb{P}GL_3 \dashrightarrow \PNk /\!\!/ \mathbb{P}GL_3.
\]
Indeed, it suffices to find a ternary form $q$ of even degree $n$ defining a plane curve, such that $\hh{n}(q)$ a GIT-stable curve of degree $n$.  For this, we compute
\[
\hh{n} (x^n+y^n+z^n)
= (1+(-1)^n) \cdot n! (u^n + v^n + w^n) = 2 \cdot n! (u^n + v^n + w^n),
\]
and we are done, since smooth curves are GIT-stable.
\end{Rem}

In the case $n=4$, the quotient $\Pq /\!\!/ \mathbb{P}GL_3$ is birational to the moduli space $\mathcal{M}_3$ of smooth curves of genus~$3$ and we obtain
\[
\oh{4} \colon \mathcal{M}_3
\dashrightarrow
\mathcal{M}_3.
\]

\begin{Cor} \label{cor:hbar}
The rational map~$\oh{4}$ is generically finite of degree~$15$.
\end{Cor}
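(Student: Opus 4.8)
The plan is to transport the degree computation for $\hh{4}$ from Theorem~\ref{thm:15} to the quotient, using the $\mathbb{P}GL_3$-equivariance of $\hh{4}$ to identify a general fibre of $\oh{4}$ with the set of isomorphism classes of the fifteen points of a general fibre of $\hh{4}$, and then showing that these fifteen classes are pairwise distinct. Throughout I work over the characteristic-zero field $k$, so that dominant generically finite maps are separable and degree equals the cardinality of a general fibre. First I would record that $\oh{4}$ is generically finite: since $\hh{4}$ is dominant (being generically finite of degree $15$ onto $\Pqd$) and the quotient map $\Pq \dashrightarrow \mathcal{M}_3$ is dominant, the relation $\oh{4} \circ \pi = \pi' \circ \hh{4}$ forces $\oh{4}$ to be dominant; as both copies of $\mathcal{M}_3$ have dimension~$6$, it then suffices to exhibit one finite fibre, which the identification below provides.

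Next I would fix a \emph{general} smooth quartic $C' \subset \bpd$. Because the discriminant and the locus of quartics with nontrivial automorphisms each have dimension strictly less than~$14$, their images under the generically finite map $\hh{4}$ are proper, so for general $C'$ the fibre $\hh{4}^{-1}(C') = \{D_1,\dots,D_{15}\}$ consists of $15$ distinct smooth quartics, all with trivial automorphism group, and $\operatorname{Aut}(C') = 1$ as well. Using the equivariance $\hh{4}(g\cdot D) = g\cdot \hh{4}(D)$, a class $[D] \in \mathcal{M}_3$ satisfies $\oh{4}([D]) = [C']$ precisely when $\hh{4}(D) = g\cdot C'$ for some $g \in \mathbb{P}GL_3$; replacing $D$ by $g^{-1}\cdot D$ gives $\hh{4}(g^{-1}\cdot D) = C'$, so $[D] = [D_j]$ for some $j$. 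Conversely each $[D_j]$ lies in the fibre, whence $\oh{4}^{-1}([C']) = \{[D_1],\dots,[D_{15}]\}$ and $\deg \oh{4}$ equals the number of \emph{distinct} isomorphism classes among $D_1,\dots,D_{15}$.

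It remains to prove that these classes are pairwise distinct. The cleanest route uses the triviality of $\operatorname{Aut}(C')$ directly: if $D_i \cong D_j$, then, the $D_\ell$ being canonical curves, the isomorphism is induced by some $g \in \mathbb{P}GL_3$ with $g\cdot D_i = D_j$, and equivariance gives $C' = \hh{4}(D_j) = g\cdot \hh{4}(D_i) = g\cdot C'$, so $g \in \operatorname{Aut}(C') = 1$ and $D_i = D_j$. Hence all fifteen classes are distinct and $\deg \oh{4} = 15$. Alternatively, one may phrase this structurally through the monodromy: the relation ``$D_i \cong D_j$'' is intrinsic to the unordered collection of preimages and is constant on a dense open subset of the base, hence defines a block system for the monodromy group $G \subseteq \mathfrak{S}_{15}$; by Theorem~\ref{thm:15}, $G \supseteq \mathfrak{A}_{15}$ is primitive, so this block system is trivial, and the automorphism computation above excludes the total relation, again yielding fifteen distinct classes. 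Since degree is a birational invariant and $\Pq /\!\!/ \mathbb{P}GL_3$ is birational to $\mathcal{M}_3$, the stated conclusion follows.

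The main obstacle I anticipate is the bookkeeping around the general point rather than any deep difficulty: one must verify carefully that a general $C'$ in the image of $\hh{4}$ has all fifteen preimages smooth and automorphism-free (so that they genuinely define points of $\mathcal{M}_3$), and, if one takes the monodromy route, that the isomorphism relation on the sheets is honestly locally constant and thus monodromy-invariant. Once the fibre of $\oh{4}$ is correctly identified with the set of classes of preimages, the equivariance of $\hh{4}$ together with the triviality of $\operatorname{Aut}(C')$ (or, equivalently, primitivity of the monodromy group) carries the argument with no further computation.
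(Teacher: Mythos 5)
Your proof is correct, but it takes a genuinely different route from the paper's. The paper derives the key point---that the~$15$ quartics in a general fibre of~$\wtH$ are pairwise non-projectively-equivalent---from monodromy: since by Theorem~\ref{thm:15} the monodromy group contains~$\mathfrak{A}_{15}$ and is in particular $2$-transitive, the locally closed set~$\ppt$ of pairs of distinct smooth quartics lying in a common fibre is irreducible; as~$\ppt$ contains the pair~$(\fer,C_0)$, which consists of non-equivalent curves (non-equivalence being forced by the multiplicity structure of the Fermat fibre in Theorem~\ref{thm:fibfer}), the general pair is non-equivalent, and the degree of~$\oh{4}$ follows. You instead argue pointwise: for a general~$C'$ all fifteen preimages are smooth and~$\operatorname{Aut}(C')$ is trivial; an abstract isomorphism $D_i \cong D_j$ is projective because smooth quartics are canonically embedded; and equivariance forces the implementing $g \in \mathbb{P}GL_3$ to stabilize~$C'$, hence to be trivial. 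Your route is more elementary in that it uses only the degree statement of Theorem~\ref{thm:15} (not the monodromy) and requires no explicitly computed non-equivalent pair, at the cost of invoking the standard fact that a general plane quartic has trivial linear automorphism group together with some general-position bookkeeping (images of the discriminant, the automorphism locus, and the boundary of~$\wtP$ are all of dimension at most~$13$, hence avoidable); the paper's route avoids the generic-automorphism fact entirely and extracts everything from the single fibre it has computed. Your sketched block-system alternative is essentially the paper's argument in different clothing, and note it is not independent, since you still need the automorphism computation to rule out the total relation. One small slip to fix: $\hh{4}$ is a \emph{contravariant}, so equivariance reads $\hh{4}(g \cdot D) = \check{g} \cdot \hh{4}(D)$, where $\check{g}$ denotes the contragredient (inverse-transpose) action on~$\bpd$, not the identity $\hh{4}(g\cdot D)=g\cdot\hh{4}(D)$ you wrote; since $\check{g} = \mathrm{id}$ in $\mathbb{P}GL_3$ if and only if $g = \mathrm{id}$, your conclusion $g \in \operatorname{Aut}(C') = 1 \Rightarrow D_i = D_j$ is unaffected.
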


\begin{proof}
Denote by $\ppt$ the locally closed subset of $\wtP \times \wtP$ consisting of pairs $(C,D)$, with $C,D$ distinct smooth plane quartics with $\wtH(C) = \wtH(D)$.  By Theorem~\ref{thm:15}, the monodromy group of the morphism~$\wtH$ is $2$-transitive on fibers and the scheme~$\ppt$ is irreducible.  The pair $(\fer,C_0)$ is in~$\ppt$ and consists of two smooth non-projectively equivalent plane quartics.  By the irreducibility of~$\ppt$, we deduce that the fiber of $\wtH$ over a general point of~$\Pqd$ consists of~$15$ pairwise non-projectively equivalent smooth plane quartics.  In particular, the rational map~$\oh{4}$ is generically finite of the same degree~$15$ as $\wtH$, as stated.
\end{proof}

So far, we proved all the results without using a computer.  The next results, though, involve more lengthier calculations that we find too tedious to check by hand.

\begin{Thm} \label{thm:monodromia}
The monodromy group of~$\wtH$ is the symmetric group~$\mathfrak{S}_{15}$.
\end{Thm}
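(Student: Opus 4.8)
The plan is to show that the monodromy group~$G$ of~$\wtH$ is \emph{not} contained in the alternating group~$\mathfrak{A}_{15}$. Combined with Theorem~\ref{thm:15}, which gives $\mathfrak{A}_{15} \subseteq G \subseteq \mathfrak{S}_{15}$, this forces $G = \mathfrak{S}_{15}$. Since we work in characteristic zero, the cover~$\wtH$ is generically \'etale and~$G$ is the Galois group of the Galois closure of the degree-$15$ extension $k(\wtP)/k(\Pqd)$. The parity criterion for Galois groups states that $G \subseteq \mathfrak{A}_{15}$ if and only if the discriminant of this extension is a square in the function field $k(\Pqd)$; geometrically, this says that every codimension-one component of the branch divisor of~$\wtH$ is non-reduced. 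It therefore suffices to exhibit one component of the branch locus over which the ramification is simple, that is, a point of~$\Pqd$ over which exactly two of the fifteen quartics collide transversally, contributing a transposition to the local monodromy.

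To make this effective I would first reduce from the $14$-dimensional base~$\Pqd$ to a one-parameter family. By Zariski's theorem on the fundamental group of the complement of a hypersurface, the monodromy of the restriction of~$\wtH$ to a general line $L \subset \Pqd$ surjects onto the full group~$G$, so it is enough to analyze a general pencil. Over such a pencil the fiber of~$\wtH$ is cut out explicitly: using the formula for $\hh{4}(q)$ obtained in the proof of Theorem~\ref{thm:fibfer}, one imposes that $\hh{4}(q)$ be proportional to the quartic parametrizing~$L$, obtaining a system of equations in the coefficients of~$q$. Choosing a rational function~$g$ on~$\wtP$ that separates the fifteen points of a general fiber, i.e. a primitive element for the extension, one forms its minimal polynomial $f_t(s) \in k(t)[s]$ of degree~$15$, where~$t$ is the coordinate on~$L$, and then its discriminant $\Delta(t) = \mathrm{disc}_s f_t \in k(t)$. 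The square class of~$\Delta$ equals that of the field discriminant, hence is independent of the choice of primitive element, so it detects the parity of~$G$ intrinsically.

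Finally I would examine the factorization of~$\Delta(t)$ and verify that it is not a square in $\overline{k}(t)$, i.e. that it has an irreducible factor of odd multiplicity. Such a factor corresponds to a value~$t_0$ at which the degree-$15$ cover acquires a simple branch point, producing a transposition in~$G$; together with $\mathfrak{A}_{15} \subseteq G$ this gives $G = \mathfrak{S}_{15}$. The main obstacle here is computational rather than conceptual: the polynomial~$f_t$ has degree~$15$ over a function field, and both assembling it from the defining equations and extracting the square class of its discriminant are sizeable manipulations. I would carry out these steps with a computer algebra system such as Magma~\cite{magma}, recording the explicit pencil, the primitive element, and the resulting discriminant in the accompanying code~\cite{PTa}. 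The one point deserving care by hand is the general-line reduction, which guarantees that the computation performed on the pencil genuinely determines the parity of the full monodromy group of~$\wtH$.
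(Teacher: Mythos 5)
Your proposal is correct in substance but follows a genuinely different route from the paper.  The paper also reduces the problem, via Theorem~\ref{thm:15}, to producing one odd element of the monodromy group; but instead of a discriminant computation it exhibits a single explicit quartic $D \subset \bpd$, namely $u^3(v+w)+v^3(u+w)+w^3(u+v)=0$, and verifies with Magma~\cite{magma} that the fiber $\wtH^{-1}(D)$ lies in the smooth locus of~$\wtP$ and consists of~$13$ reduced points together with one point of multiplicity~$2$; by the lemma of Harris~\cite{Har}*{Lemma on p.~698}, the local monodromy around such a fiber contains a transposition, whence $G = \mathfrak{S}_{15}$.  Your parity argument is a legitimate alternative: the reduction to a general pencil via Zariski's theorem is valid in characteristic zero, and the non-squareness of the discriminant in $\overline{k}(t)$ does force $G \not\subseteq \mathfrak{A}_{15}$, without having to locate a fiber with exactly the right degeneration or to verify that it lies in the smooth locus of~$\wtP$.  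Two caveats on your plan.  First, when assembling $f_t$ from the proportionality equations you must saturate away the locus where $\hh{4}(q)=0$ (for instance $q=x^4$ lies on it): that locus satisfies the proportionality condition for every~$t$ and would contaminate the elimination, so the degree-$15$ polynomial you want is the minimal polynomial over the graph closure, not over the naive proportionality scheme.  Second, an odd-multiplicity factor of $\Delta(t)$ certifies an odd local monodromy element but not necessarily a simple branch point: for tame ramification the valuation of the discriminant is $\sum_i (e_i-1)$, so, for example, a $4$-cycle also contributes odd multiplicity; this is harmless, since any odd element suffices once $\mathfrak{A}_{15} \subseteq G$ is known, but your phrasing ``producing a transposition'' overstates what the factor gives.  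As for the trade-off: the paper's check is a single zero-dimensional fiber computation at a well-chosen point, while your route demands a degree-$15$ minimal polynomial over $\mathbb{Q}(t)$ and its discriminant---intrinsically cleaner, since it detects the parity of~$G$ globally, but computationally considerably heavier.
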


\begin{proof}
By Theorem~\ref{thm:15}, it suffices to show that the monodromy group of~$\wtH$ contains a transposition.  For this, we exhibit a plane quartic~$D \subset \bpd$ such that $\wtH^{-1}(D)$ is contained in the smooth locus of~$\wtP$ and consists of~$13$ reduced points and a single non-reduced of multiplicity~$2$ (see~\cite{Har}*{Lemma on p.~698}).  Thus, it is sufficient to find a plane quartic~$D$ for which the fiber $\wtH^{-1}(D)$ consists of~$14$ distinct pairs $(C,D)$ with $C \subset \bp$ a smooth plane quartic.  Using the computer algebra program Magma~\cite{magma}, we check that the curve $D$ with equation
\begin{equation} \label{e:kllk}
D \colon \qquad
u^3 (v + w) + v^3 (u + w) + w^3 (u + v) = 0
\end{equation}
has the required properties and we are done.
\end{proof}

\begin{Rem}
In the proof of Theorem~\ref{thm:monodromia}, we saw that above the curve~$D$ of~\eqref{e:kllk} the morphism~$\wtH$ has a unique simple ramification point.  This point corresponds to the smooth plane quartic $Q \subset \bp$ with equation
\[
Q \colon \qquad
\left\{
\begin{array}{rcl}
(x^4 + y^4 + z^4) - 4 (x^3 (y + z) + y^3 (x + z) + z^3 (x + y))
\\[4pt]
+ 6 (x^2 y^2 + x^2 z^2  + y^2 z^2) - 12 xyz (x + y + z)
& = &
0.
\end{array}
\right.
\]
\end{Rem}

\section{A geometric invariant for plane curves}
\label{sec:inva}

Let~$n$ be an even, positive integer; recall that we set~$N+1=\binom{n+2}{2}$.  We define an invariant~$\ro{n}$ of degree~$N+1$ associated to plane curves of degree~$n$.  In the case of plane quartics, we obtain an expression for~$\ro{4}$ in terms of the Dixmier-Ohno invariants.  For background on invariants of plane quartics, we refer to~\cites{dix,ohn}.  We used the package developed in~\cite{lrs} for computations with Dixmier-Ohno invariants.

Let~$R = \Z[a_{ijk}]$ denote the polynomial ring over the integers with~$N+1$ indeterminates, corresponding to the coefficients of the monomials of degree~$n$ in $x,y,z$.  Let $q \in R [x,y,z]_n$ be the universal ternary form $q = \sum a_{ijk} x^i y^j z^k$ of degree~$n$.  We define an $(N+1) \times (N+1)$ symmetric matrix~$\Ro{n}$ with rows and columns indexed by the~$N+1$ monomials of degree~$n$ in $x,y,z$.  The entry of~$\Ro{n}$ corresponding to the pair of monomials $(m_1,m_2)$ is
\[
(\Ro{n})_{m_1,m_2} = \tn ( q , m_1 , m_2 ).
\]

We give two different interpretations for the matrix~$\Ro{n}$.  First, the matrix~$\Ro{n}$ determines a $\Z$-module homomorphism $\Z[x,y,z]_n \to \Z[x,y,z]_n^\vee$ given by $q_1 \mapsto \tn (q,q_1,-)$.  Alternatively, the differential of the map~$\hh{n}$ at the form~$q$ is the linear transformation $\Z[x,y,z]_n \to \Z[u,v,w]_n$ given by $q_1 \mapsto \Jn{n}(q,q_1)$.  Identifying $\Z[u,v,w]_n$ with $\Z[x,y,z]_n^\vee$ via the polar pairing, we obtain that the linear transformation~$\Ro{n}$ is the differential of the map~$\hh{n}$ at~$q$:
\begin{equation} \label{e:dih}
{\textrm{d}}_q \hh{n} = \Ro{n} .
\end{equation}

The determinant of the matrix~$\Ro{n}$ is a polynomial of degree~$N+1$ in the~$N+1$ variables of~$R$.  From either of the two descriptions above, it is clear that~$\det \Ro{n}$ is an invariant for the action of~$SL_{3}$.  We let
\[
\kappa_n = \prod_{\substack{i,j,k \geq 0 \\i+j+k=n}} i!j!k!
\]
be the product of the factorials of all the exponents of all the monomials of degree~$n$ in $x,y,z$; the first few values of~$\kappa_n$ for even~$n$ are
\[
\kappa_2
=
2^3,
\qquad
\kappa_4
=
2^{24} \cdot 3^9,
\qquad
\kappa_6
=
2^{84} \cdot 3^{33} \cdot 5^9,
\qquad
\kappa_8
=
2^{201} \cdot 3^{81} \cdot 5^{30} \cdot 7^9.
\]
We define
\begin{equation} \label{e:rain}
\ro{n} = \frac{1}{\kappa_n}
\det \Ro{n} .
\end{equation}
We deduce that the differential of map~$\hh{n}$ is not an isomorphism at the vanishing set of~$\ro{n}$ and therefore~$\ro{n}$ is an~$SL_3$-invariant.

In the case $n=4$ of ternary quartic forms, the ring of invariants under $SL_3$ is completely explicit.  It is generated by~$13$ invariants, called the~{\emph{Dixmier-Ohno invariants}}:
\begin{itemize}
\item
(Dixmier)
$I_{3d}$, for $d \in \{1,\ldots,7\}$,
\item
(Ohno)
$J_{3d}$, for $d \in \{3,\ldots,7\}$, and
\item
the discriminant $I_{27}$.
\end{itemize}
The indices represent the degree of each invariant as a polynomial in the coefficients of the quartic form.  We follow the notation of~\cite{lrs}.

\begin{Thm} \label{thm:lrs}
The invariant $\ro{4}$ of degree~$15$ satisfies the identity
\[
\frac{2 \cdot 5^4 \cdot 7}{{24}^{15}} \;
\ro{4} =
\left\{
\begin{array}{rrcrr}
2 \cdot   3^3 \cdot 5 \cdot 7^2 &
J_{15}
& - &
2 \cdot   3^3 \cdot 5 \cdot 7&
I_{15} \\
- 3^2 \cdot 5 \cdot 109  &
I_3 J_{12}
& + &
2^3 \cdot 3^5 \cdot 5 &
I_3 I_{12} \\
+ 2   \cdot 3^2 \cdot 137  &
I_3^2 J_9
& + &
3 \cdot   271 &
I_3^2 I_9 \\
+ 2^3 \cdot 3^3 \cdot 5 \cdot 7^2 &
I_6 J_9
& - &
2^4 \cdot 3^3 \cdot 5 \cdot 7^2 &
I_6 I_9 \\
- 2^3 \cdot   5 \cdot 7 \cdot 149 &
I_3^3 I_6
& + &
2^7 \cdot 3^3 \cdot 5 \cdot 7 \cdot 13 &
I_3 I_6^2.
\end{array}
\right.
\]
\end{Thm}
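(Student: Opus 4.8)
The plan is to combine the structure of the ring of invariants of ternary quartics with an explicit, but heavy, computation. By~\eqref{e:dih} the matrix~$\Ro{4}$ represents the differential ${\textrm d}_q\hh{4}$, so $\det\Ro{4}$, and hence $\ro{4}=\frac{1}{\kappa_4}\det\Ro{4}$, is an $SL_3$-invariant of ternary quartic forms that is homogeneous of degree~$15$ in the coefficients $a_{ijk}$ of~$q$: indeed each entry $(\Ro{4})_{m_1,m_2}=t_4(q,m_1,m_2)$ is linear in the $a_{ijk}$, and $\Ro{4}$ has size $\binom{6}{2}=15$. Since the ring of $SL_3$-invariants of ternary quartics is generated by the thirteen Dixmier--Ohno invariants, its degree-$15$ graded piece is spanned by the monomials of total degree~$15$ in the generators. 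The only generators of degree at most~$15$ are $I_3,I_6,I_9,J_9,I_{12},J_{12},I_{15},J_{15}$, and the corresponding degree-$15$ monomials are exactly
\[
J_{15},\ I_{15},\ I_3J_{12},\ I_3I_{12},\ I_3^2J_9,\ I_3^2I_9,\ I_6J_9,\ I_6I_9,\ I_3^3I_6,\ I_3I_6^2,\ I_3^5,
\]
eleven in all. Consequently $\ro{4}$ is a $\mathbb{Q}$-linear combination of these eleven invariants, and the theorem reduces to identifying the coefficients, that of $I_3^5$ turning out to vanish.

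To pin down the coefficients I would work by linear algebra on polynomials in the fifteen variables $a_{ijk}$. Both $\ro{4}$ and each of the eleven monomials above are explicit homogeneous polynomials of degree~$15$ in the $a_{ijk}$: the left-hand side is obtained by forming the $15\times15$ matrix~$\Ro{4}$ from the trilinear form~$t_4$ and expanding its determinant, while the right-hand monomials are assembled from the Dixmier--Ohno invariants computed by the {\tt g3twists} package of~\cite{lrs}. Writing $\ro{4}=\sum_I c_I M_I$, one solves for the scalars~$c_I$. In practice the cleanest route is to evaluate both sides on a collection of test quartics (diagonal and Fermat-type forms together with several random forms), so that each evaluation yields a single linear equation in the unknowns~$c_I$; a modest number of sufficiently generic forms renders the system determined. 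Verifying beforehand that the eleven monomials are linearly independent as polynomials in the~$a_{ijk}$ guarantees uniqueness of the solution, and solving the system over~$\mathbb{Q}$ produces the stated integer coefficients together with the normalizing factor $\tfrac{2\cdot5^4\cdot7}{24^{15}}$.

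The main obstacle is computational rather than conceptual. Expanding a $15\times15$ determinant whose entries are linear forms in fifteen variables, and evaluating the Dixmier--Ohno invariants on enough test forms in exact arithmetic, are both too tedious to carry out by hand; this is precisely where Magma~\cite{magma} and the {\tt g3twists} package are used in an essential way. A secondary point requiring care is reliability of the arithmetic: since the goal is an \emph{exact} polynomial identity rather than an approximate one, the linear system must be solved over~$\mathbb{Q}$ (or over a large finite field to guess the coefficients and then confirmed over~$\mathbb{Q}$), and the resulting identity should be checked either symbolically or by evaluation at further random forms beyond those used to determine the~$c_I$. The code that performs and independently verifies these steps is included in the source of~\cite{PTa}.
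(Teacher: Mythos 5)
Your proposal is correct and matches the paper's own proof: both express $\ro{4}$ as a $\mathbb{Q}$-linear combination of the eleven degree-$15$ monomials in the Dixmier--Ohno invariants and determine the coefficients by evaluating $\ro{4}$ (via $\det \Ro{4}/\kappa_4$) and the invariants (via the {\tt g3twists} package) on sufficiently many general test quartics, then solving the resulting linear system in Magma. The paper's argument is exactly this direct computer calculation, including the observation that $I_3^5$ is the unique degree-$15$ monomial absent from the final expression.
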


\begin{proof}
The argument is a direct computer calculation.  There are~$11$ monomials of degree~$15$ in the Dixmier-Ohno invariants and the invariant~$\ro{4}$ is a linear combination of these~$11$ monomials.  By choosing~$11$ sufficiently general ternary quartic forms, we check that the identity in the statement of the theorem is the unique solution.  Note that the monomial~$I_3^{5}$ is the unique monomial of degree~$15$ in the Dixmier-Ohno invariants not appearing the expression of~$\ro{4}$.
\end{proof}

\begin{Rem}\label{rem:chp0}
So far, the characteristic of the ground field~$k$ was zero.  Nevertheless, the map~$\hh{4}$ is defined over $\textrm{Spec}\, \Z$ and hence over a field of arbitrary characteristic.  We now assume that the characteristic of the ground field is coprime with~$6$ and we check that the map~$\hh{4}$ is generically finite of degree~$15$.  First, we evaluate the invariant $\ro{4}$ on the quartic form $\kl = x^3 y + y^3 z + z^3 x$, vanishing on the Klein plane quartic.  We obtain $\ro{4} (\kl) = 2^{34} \cdot 3^{24}$, which does not vanish in~$k$.  Therefore, the map~$\hh{4}$ is generically smooth over~$k$ and hence generically finite.  We conclude, by generic flatness, that the degree of~$\hh{4}$ is also~$15$.
\end{Rem}

\appendix

\section{The scheme~$F$ and its degree}
\label{ap:my}

The proof of Lemma~\ref{lem:groeb} appearing in this Appendix is entirely independent of the results of the rest of the paper.  We compute without using the computer the degree of a zero-dimensional scheme~$F$, isomorphic to a scheme that appears in the proof of Theorem~\ref{thm:fibfer}.  The proof could just as well be carried out over the field of rational numbers by a computer algebra system, such as Magma.

Let $\mathbb{P}^6_{k}$ be the projective space over the field~$k$ with homogeneous coordinates $\rho, \sigma_1, \sigma_2, \sigma_3, \tau_1, \tau_2, \tau_3$.  We introduce the subscheme $F$ of $\mathbb{P}^6_{k}$.  Let $\G_0$ be the set of~$14$ forms
\begin{equation} \label{eq:fibfer}
\G_0 =
\left\{
\begin{array}{c}
A_1 = \sigma_{3}^2 - \sigma_{2}^2
\qquad
\qquad
A_2 = \sigma_{3}^2 - \sigma_{1}^2
\\[5pt]
\sigma_{1} \tau_{2}
\qquad
\quad
\sigma_{2} \tau_{1}
\qquad
\quad
\sigma_{3} \tau_{1}
\\[4pt]
\sigma_{1} \tau_{3}
\qquad
\quad
\sigma_{2} \tau_{3}
\qquad
\quad
\sigma_{3} \tau_{2}
\\[5pt]
S_1 = 12 \rho \sigma_{1} + 2 \sigma_{2} \sigma_{3} + \tau_{1}^2
\qquad
\quad
T_1 = 4 \sigma_{1} \tau_{1} - \tau_{2} \tau_{3}
\\[4pt]
S_2 = 12 \rho \sigma_{2} + 2 \sigma_{1} \sigma_{3} + \tau_{2}^2
\qquad
\quad
T_2 = 4 \sigma_{2} \tau_{2} - \tau_{1} \tau_{3}
\\[4pt]
S_3 = 12 \rho \sigma_{3} + 2 \sigma_{1} \sigma_{2} + \tau_{3}^2
\qquad
\quad
T_3 = 4 \sigma_{3} \tau_{3} - \tau_{1} \tau_{2}
\end{array}
\right\} .
\end{equation}
Denote by $F$ the scheme
\begin{equation} \label{eq:fibf}
F \colon V(\G_0) \subset \mathbb{P}^6_{k}
\end{equation}
defined by the vanishing set of~$\G_0$ in~$\mathbb{P}^6_{k}$.

\begin{Lem} \label{lem:groeb}
The scheme $F$ has dimension~$0$ and degree~$15$.  The support of $F$ consists of~$5$ points: the point $[1,0,0,0,0,0,0]$ of multiplicity~$11$ and the~$4$ points $[1,-6,-6,-6,0,0,0]$, $[1,-6,6,6,0,0,0]$, $[1,6,-6,6,0,0,0]$, $[1,6,6,-6,0,0,0]$ of multiplicity~$1$.
\end{Lem}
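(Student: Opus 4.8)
The plan is to compute the degree of the $0$-dimensional scheme $F$ as the sum of its local lengths, after first pinning down its support and then the multiplicity at each point. Concretely, I would show that the support consists of exactly the five listed points (which already forces $\dim F = 0$), that $F$ is reduced of length~$1$ at each of the four points with $\tau_1=\tau_2=\tau_3=0$ and $\sigma_i=\pm 6$, and that its length at $p_0=[1,0,0,0,0,0,0]$ equals~$11$; the degree is then $\deg F = 4\cdot 1 + 11 = 15$. To locate the support over an algebraic closure of $k$, I would use $A_1,A_2$ to force $\sigma_1^2=\sigma_2^2=\sigma_3^2$, so that either all three $\sigma_i$ vanish or none does. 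If all $\sigma_i=0$, then the forms $S_i$ become $\tau_i^2$, whence every $\tau_i=0$ and we obtain $p_0$. If all $\sigma_i\neq 0$, the six monomial generators $\sigma_i\tau_j$ with $i\neq j$ force $\tau_1=\tau_2=\tau_3=0$; the relations $S_i=0$ then read $6\rho\,\sigma_i+\sigma_j\sigma_k=0$ with $\{i,j,k\}=\{1,2,3\}$, and writing $\sigma_i=\epsilon_i s$ with $\epsilon_i=\pm1$ one checks that all three collapse to the single condition $6\rho=-\epsilon_1\epsilon_2\epsilon_3\,s$. Normalising $\rho=1$ produces exactly the four points of the statement, the sign vectors $\pm(\epsilon_1,\epsilon_2,\epsilon_3)$ yielding the same point. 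Thus the support is the five claimed points and $F$ is finite, so $\dim F=0$.

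Next I would treat the four points $p$ with $\tau=0$ and $\sigma_i=\pm 6$ by showing they are cut out transversally. In the affine chart $\rho=1$ with coordinates $\sigma_1,\sigma_2,\sigma_3,\tau_1,\tau_2,\tau_3$, it suffices to exhibit six generators whose Jacobian has rank~$6$ at $p$, and $S_1,S_2,S_3,T_1,T_2,T_3$ will do. Since at $\tau=0$ the $\tau$-derivatives of the $S_i$ and the $\sigma$-derivatives of the $T_i$ vanish, this Jacobian is block diagonal. The $T$-block is $\operatorname{diag}(4\sigma_1,4\sigma_2,4\sigma_3)$, clearly invertible, and the $S$-block is the symmetric matrix with diagonal entries $12$ and off-diagonal entries $2\sigma_k$, whose determinant expands to $1728-48(\sigma_1^2+\sigma_2^2+\sigma_3^2)+16\,\sigma_1\sigma_2\sigma_3$. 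At each of the four points one has $\sigma_i^2=36$ and $\sigma_1\sigma_2\sigma_3=-216$, so this determinant equals $-6912\neq 0$. Hence the Jacobian has rank~$6$ and $F$ is reduced of length~$1$ at each of these points.

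The heart of the matter is the length at $p_0$. In the chart $\rho=1$ one has $\partial S_i/\partial\sigma_j=12\,\delta_{ij}$ at $p_0$, so $S_1,S_2,S_3$ let me eliminate $\sigma_1,\sigma_2,\sigma_3$ as power series $\sigma_i=-\tfrac1{12}\tau_i^2+(\text{higher order})$ and identify the complete local ring with $k[[\tau_1,\tau_2,\tau_3]]/J$, where $J$ is generated by the substituted forms $\bar A_1,\bar A_2$, the six $\overline{\sigma_i\tau_j}$, and $\bar T_1,\bar T_2,\bar T_3$. After substitution each $\bar T_\ell$ has lowest-degree part a product $\tau_i\tau_j$, so $J$ contains $\tau_1\tau_2,\tau_1\tau_3,\tau_2\tau_3$ to leading order; moreover the combinations $\tau_i\bar T_i-\tau_j\bar T_j$ have their degree-$3$ parts $\tau_1\tau_2\tau_3$ cancel, contributing the degree-$4$ binomials $\tau_i^4-\tau_j^4$. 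I would then pass to the associated graded ring $k[\tau]/J_{\mathrm{in}}$: its initial ideal contains $J_{\mathrm{in}}'=(\tau_1\tau_2,\tau_1\tau_3,\tau_2\tau_3,\ \tau_1^4-\tau_2^4,\ \tau_1^4-\tau_3^4)$, and since each $\tau_i^5$ already lies in $J_{\mathrm{in}}'$, the quotient $k[\tau]/J_{\mathrm{in}}'$ has the basis consisting of $1$, of $\tau_i,\tau_i^2,\tau_i^3$ for $i=1,2,3$, and of $\tau_1^4$, hence dimension~$11$. Because $J_{\mathrm{in}}\supseteq J_{\mathrm{in}}'$, this yields length $\le 11$ at $p_0$.

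The main obstacle is the matching lower bound: one must verify that the above generators form a standard basis of $J$, i.e. that $J_{\mathrm{in}}=J_{\mathrm{in}}'$ and no further initial form appears, so that the length at $p_0$ is exactly~$11$ rather than smaller. This reduces to checking that all remaining S-pairs reduce to zero. I expect this to go through because the tails of the $\bar T_\ell$ live in degrees $\ge 3$ while the product relations $\tau_i\tau_j$ trivialise every mixed monomial, so each S-pair should reduce either through a product relation or through one of the degree-$4$ binomials, leaving no new standard-basis element. (Equivalently, the same conclusion follows from a Gröbner-basis computation of the homogeneous ideal $\G_0$ in $k[\rho,\sigma_1,\sigma_2,\sigma_3,\tau_1,\tau_2,\tau_3]$ and the resulting Hilbert function, as the appendix records.) Once the length at $p_0$ is confirmed to be~$11$, summing the five local lengths gives $\deg F = 11+1+1+1+1 = 15$, which together with $\dim F=0$ and the support computation completes the proof.
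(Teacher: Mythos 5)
Your strategy is genuinely different from the paper's (sum of local lengths, rather than a global degree computation), and most of it is sound: the determination of the support, the transversality check at the four points with $\sigma_1\sigma_2\sigma_3\neq 0$ (the block-diagonal Jacobian with $T$-block $\mathrm{diag}(4\sigma_1,4\sigma_2,4\sigma_3)$ and $S$-block of determinant $1728-48(\sigma_1^2+\sigma_2^2+\sigma_3^2)+16\,\sigma_1\sigma_2\sigma_3=-6912$), and the upper bound of $11$ for the length at $p_0=[1,0,0,0,0,0,0]$ via the elimination $\sigma_i=-\tau_i^2/12+\cdots$ and the ideal $J_{\mathrm{in}}'=(\tau_1\tau_2,\tau_1\tau_3,\tau_2\tau_3,\tau_1^4-\tau_2^4,\tau_1^4-\tau_3^4)$ are all correct. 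But there is a genuine gap exactly where you flag it: you never establish the matching lower bound, i.e.\ that $J_{\mathrm{in}}=J_{\mathrm{in}}'$, and ``I expect this to go through'' is not an argument. Since your route computes $\deg F$ as the sum of local multiplicities, without this step you only obtain multiplicity $\le 11$ at $p_0$ and $\deg F\le 15$; nothing in your argument excludes, say, length $10$ at $p_0$ and total degree $14$. Note also that the verification you defer is not a routine finite check: after eliminating the $\sigma_i$, the generators of $J$ are power series with infinite tails, so the S-pair reductions must be controlled by Mora's tangent-cone algorithm or an explicit bound on the tails; a cancellation in some S-pair could in principle produce a new initial form of low degree. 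Your parenthetical fallback --- that the conclusion ``follows from a Gr\"obner-basis computation \ldots as the appendix records'' --- is an appeal to the paper's own proof, so it cannot fill the hole in a blind argument.

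The paper's proof is organized precisely to avoid this difficulty, by running the logic in the opposite direction. It computes the total degree first: it augments $\mathscr{G}_0$ by four explicit polynomial consequences $\mathscr{G}_1$, fixes a weighted monomial order on the homogeneous coordinate ring of $\mathbb{P}^6_k$, and checks (a finite, mechanical Buchberger verification on $18$ honest polynomials, with the basis and the initial ideal written out) that $\mathscr{G}_0\cup\mathscr{G}_1$ is a Gr\"obner basis; the initial ideal visibly has Hilbert polynomial $15$, so $\dim F=0$ and $\deg F=15$ with no local analysis at $p_0$ at all. The multiplicity $11$ at $p_0$ then comes by subtraction, $15-4\cdot 1$, using only the support computation and the reducedness of the other four points --- the two parts of your argument that are correct. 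To rescue your approach you must either carry out the local standard-basis computation at $p_0$ in full, or replace the local bound by such a global Gr\"obner/Hilbert-polynomial computation so that the multiplicity at $p_0$ is obtained for free.
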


\begin{proof}
Let $I = \langle \G_0 \rangle$ be the ideal generated by~$\G_0$.  As a first step, we determine a Gr\"obner basis for the ideal~$I$.  Let $\G_1$ be the set of~$4$ forms
\begin{equation} \label{eq:bag}
\G_1 =
\left\{
\begin{array}{c}
\frac{1}{2}
(\sigma_{2} S_1 + 2 \sigma_{3} A_2 - \tau_{1} \cdot \sigma_{2} \tau_{1})
\\[4pt]
\frac{1}{2}
(\sigma_{3} S_1 - \tau_{1} \cdot \sigma_{3} \tau_{1})
\\[4pt]
\frac{1}{2}
(\sigma_{3} S_2 - \tau_{2} \cdot \sigma_{3} \tau_{2})
\\[4pt]
\frac{1}{4}
(\sigma_{3} T_3 + \tau_{2} \cdot \sigma_{3} \tau_{1})
\end{array}
\right\}
=
\left\{
\begin{array}{c}
6 \rho \sigma_{2} \sigma_{1} + \sigma_{3}^3 \\[4pt]
6 \rho \sigma_{3} \sigma_{1} + \sigma_{3}^2 \sigma_{2} \\[4pt]
6 \rho \sigma_{3} \sigma_{2} + \sigma_{3}^2 \sigma_{1} \\[4pt]
\sigma_{3}^2 \tau_{3}
\end{array}
\right\}.
\end{equation}
By construction, the forms in $\G_1$ are contained in the ideal $I$.  Let $\G$ be the set of~$18$ forms $\G = \G_0 \cup \G_1 \subset I$.

Assign the following weights to the variables:
\[
\begin{array}{|c|c|c|c|c|c|c|c|}
\hline
\textrm{Variable:} &
\rho &
\sigma_{3} &
\sigma_{2} &
\sigma_{1} &
\tau_{1} &
\tau_{2} &
\tau_{3} \\[3pt]
\hline
\textrm{Weight:} &
1 &
3 &
4 &
4 &
5 &
5 &
5 \\
\hline
\end{array}
\]
and resolve ties among monomials using the lexicographic ordering with
\[
\rho < \sigma_{3} < \sigma_{2} < \sigma_{1} < \tau_{1} < \tau_{2} < \tau_{3}.
\]
Using Buchberger's Criterion, it is straightforward to check that $\G$ is a Gr\"obner basis of $I$ with respect to the monomial order just defined.  We omit this routine computation.

Let~$I_0$ be the initial ideal of $I$.  Since $\G$ is a Gr\"obner basis of $I$, the monomial ideal $I_0$ is the ideal generated by the~$18$ initial monomials of the elements of $\G$:
\[
I_0 =
\left\langle
\begin{array}{cccccc}
\sigma_{3}^3
&
\sigma_{3}^2 \tau_{2}
&
\sigma_{3}^2 \tau_{1}
&
\sigma_{3}^2 \tau_{3}
&
\sigma_{2}^2
&
\sigma_{1}^2
\\[6pt]
\sigma_{3} \tau_{1}
&
\sigma_{2} \tau_{3}
&
\sigma_{1} \tau_{2}
&
\sigma_{3} \tau_{2}
&
\sigma_{2} \tau_{1}
&
\sigma_{1} \tau_{3}
\\[6pt]
\tau_{1}^2
&
\tau_{2}^2
&
\tau_{3}^2
&
\tau_{1} \tau_{2}
&
\tau_{1} \tau_{3}
&
\tau_{2} \tau_{3}
\end{array}
\right\rangle
.
\]
The~$15$ monomials
\[
\begin{array}{cccccccc}
1
&
\sigma_{1}
&
\sigma_{2}
&
\sigma_{3}
&
\tau_{1}
&
\tau_{2}
&
\tau_{3}
\\[4pt]
\sigma_{3}^2
&
\sigma_{2} \sigma_{3}
&
\sigma_{1} \sigma_{3}
&
\sigma_{1} \sigma_{2}
&
\sigma_{3} \tau_{3}
&
\sigma_{2} \tau_{2}
&
\sigma_{1} \tau_{1}
&
\sigma_{1} \sigma_{2} \sigma_{3}
\end{array}
\]
are all the monomials not divisible by~$\rho$ and not contained in the ideal $I_0$.  Thus, the Hilbert polynomial of the ideal $I_0$ is the constant polynomial $15$, and hence the same is true for the ideal $I$.  We conclude that the scheme $F$ has dimension~$0$ and degree~$15$, as stated.

A direct calculation of the Jacobian of the given equations shows that the points satisfying the inequality $\sigma_{1} \sigma_{2} \sigma_{3} \neq 0$ are reduced points of the scheme $F$.

Denote by $F_{\rm red}$ the reduced subscheme associated to $F$.  We observe that $\tau_1,\tau_2,\tau_3$ vanish on $F_{\rm red}$.  We easily obtain that $F_{\rm red}$ consists of the~$5$ points $[1,-6,-6,-6,0,0,0]$, $[1,-6,6,6,0,0,0]$, $[1,6,-6,6,0,0,0]$, $[1,6,6,-6,0,0,0]$ and $[1,0,0,0,0,0,0]$.  Since the points different from $[1,0,0,0,0,0,0]$ are reduced and the total degree is~$15$, the result follows.
\end{proof}

\begin{bibdiv}
\begin{biblist}

\bib{magma}{article}{
    AUTHOR = {Bosma, Wieb},
    AUTHOR = {Cannon, John},
    AUTHOR = {Playoust, Catherine},
     TITLE = {The {M}agma algebra system. {I}. {T}he user language},
      NOTE = {Computational algebra and number theory (London, 1993)},
   JOURNAL = {J. Symbolic Comput.},
    VOLUME = {24},
      YEAR = {1997},
    NUMBER = {3-4},
     PAGES = {235--265}
}

\bib{dix}{article}{
   author={Dixmier, Jacques},
   title={On the projective invariants of quartic plane curves},
   journal={Adv. in Math.},
   volume={64},
   date={1987},
   number={3},
   pages={279--304}
}

\bib{cag}{book}{
   author={Dolgachev, Igor V.},
   title={Classical algebraic geometry},
   note={A modern view},
   publisher={Cambridge University Press, Cambridge},
   date={2012},
   pages={xii+639}
}

\bib{dolg}{article}{
   author={Dolgachev, Igor V.},
   title={Rational Self-Maps of Moduli Spaces},
   journal={Pure and Applied Mathematics Quarterly},
   volume={12},
   date={2016},
   number={3},
   pages={335--352}
}

\bib{gw}{book}{
   author={G\"ortz, Ulrich},
   author={Wedhorn, Torsten},
   title={Algebraic geometry I},
   series={Advanced Lectures in Mathematics},
   note={Schemes with examples and exercises},
   publisher={Vieweg + Teubner, Wiesbaden},
   date={2010},
   pages={viii+615}
}

\bib{Har}{article}{
   author={Harris, Joe},
   title={Galois groups of enumerative problems},
   journal={Duke Math. J.},
   volume={46},
   date={1979},
   number={4},
   pages={685--724}
}

\bib{isa}{book}{
   author={Isaacs, I. Martin},
   title={Finite group theory},
   series={Graduate Studies in Mathematics},
   volume={92},
   publisher={American Mathematical Society, Providence, RI},
   date={2008},
   pages={xii+350}
}

\bib{lrs}{article}{
   author={Lercier, Reynald},
   author={Ritzenthaler, Christophe},
   author={Sijsling, Jeroen},
   title={Reconstructing plane quartic from their invariants},
   journal={arXiv},
   volume={\href{http://arxiv.org/abs/1606.05594}{arxiv.org/abs/1606.05594}}
}

\bib{mats}{book}{
   author={Matsumura, Hideyuki},
   title={Commutative ring theory},
   series={Cambridge Studies in Advanced Mathematics},
   volume={8},
   edition={2},
   note={Translated from the Japanese by M. Reid},
   publisher={Cambridge University Press, Cambridge},
   date={1989},
   pages={xiv+320}
}

\bib{ohn}{article}{
   author={Ohno, Toshiaki},
   title={The graded ring of invariants of ternary quartics I},
   conference={
      title={unpublished, available \href{https://www.win.tue.nl/~aeb/math/ohno-preprint.2007.05.15.pdf}{here}}
   }
}

\bib{PTr}{article}{
   author={Pacini, Marco},
   author={Testa, Damiano},
   title={Reconstructing general plane quartics from their inflection lines},
   journal={to appear in Trans. Amer. Math. Soc.},
   date={2018}
}

\bib{PTa}{article}{
   author={Pacini, Marco},
   author={Testa, Damiano},
   title={On a question of Dolgachev},
   journal={arXiv},
   volume={\href{https://arxiv.org/abs/1806.02113}{	arXiv:1806.02113}}
}

\bib{sal}{book}{
   author={Salmon, George},
   title={A treatise on the higher plane curves: intended as a sequel to ``A
   treatise on conic sections''},
   series={2nd ed},
   publisher={Hodges Foster and Co., Grafton Street},
   date={1873},
   pages={ix+379}
}

\end{biblist}
\end{bibdiv}

\bigskip

\small{
\noindent
Marco Pacini, Instituto de Matem\'atica, Universidade Federal Fluminense, \\
Rio de Janeiro, Brazil\\
E-mail: \tt{pacini.uff@gmail.com}, \tt{pacini@impa.br}}

\bigskip

\small{
\noindent
Damiano Testa, Mathematics Institute, University of Warwick,\\
Coventry, CV4 7AL,\\
United Kingdom\\
E-mail: \tt{adomani@gmail.com}}

\end{document}